\newcommand{\eps}{\varepsilon}
\newcommand{\opnm}{\operatorname}
\newcommand{\BigO}{\mathcal{O}}
\newcommand{\C}{\mathbb{C}}
\newcommand{\R}{\mathbb{R}}
\newcommand{\N}{\mathbb{N}}
\newcommand{\Dom}{\operatorname{Dom}}
\newcommand{\Num}{\operatorname{Num}}
\newtheorem{theorem}{Theorem}[section]
\newtheorem{proposition}[theorem]{Proposition}
\newtheorem{corollary}[theorem]{Corollary}
\newtheorem{lemma}[theorem]{Lemma}
\theoremstyle{definition}
\theoremstyle{remark}
\newtheorem{remark}[theorem]{Remark}
\newtheorem{asm}{Assumption}
\newtheorem*{acknowledgements}{Acknowledgements}
\numberwithin{equation}{section}
\title[Various rates of projection growth]{Differential operators admitting various rates of spectral projection growth}
\author{Boris Mityagin}
\address[Boris Mityagin]{Department of Mathematics, The Ohio State University, 231 West 18th Ave.,
Columbus, OH 43210, USA}
\email{mityagin.1@osu.edu}
\author{Petr Siegl}
\address[Petr Siegl]{Mathematical Institute, University of Bern, Alpeneggstrasse 22, 3012 Bern, Switzerland; On leave from Nuclear Physics Institute ASCR, 25068 \v Re\v z, Czech Republic}
\email{petr.siegl@math.unibe.ch}
\author{Joe Viola}
\address[Joe Viola]{Laboratoire de Math\'ematiques J. Leray, UMR 6629 du CNRS, Universit\'e de Nantes, 2, rue de la Houssini\`ere, 44322 Nantes Cedex 03, France}
\email{Joseph.Viola@univ-nantes.fr}
\begin{document}

\begin{abstract}
We consider families of non-self-adjoint perturbations of the self-adjoint Schr\"odinger operators with single-well potentials. The norms of spectral projections of these operators are found to grow at intermediate rates from arbitrarily slowly to exponentially rapidly.
\end{abstract}

\keywords{harmonic and anharmonic oscillators, Hermite functions, spectral projections, Riesz basis. \\ 
\copyright 2016. This manuscript version is made available under the CC-BY-NC-ND 4.0 license \url{http://creativecommons.org/licenses/by-nc-nd/4.0/}}

\subjclass[2010]{47A55, 34L10}
\maketitle

\section{Introduction}

We consider operators $T$ on separable Hilbert spaces $H$; in particular, the operators studied are unbounded, densely defined, and have compact resolvents. In this case, the spectrum of $T$ consists of at most a countable number of isolated eigenvalues accumulating at infinity. For an eigenvalue $\lambda$, we may take an $\eps > 0$ smaller than the distance from $\lambda$ to any other eigenvalue, and we define the spectral projection $P_{\lambda}$ via the Cauchy integral formula
\begin{equation}\label{e1ProjDef}
	P_{\lambda} = \frac{1}{2\pi i}\int_{|\zeta - \lambda| = \eps} (\zeta-T)^{-1}\,d\zeta.
\end{equation}
See, for instance,\ \cite[Thm.\ VII.3.18, VII.4.5]{DunSch1988}.
The range of $P_\lambda$ is a finite-dimensional space consisting of root vectors of $T$ corresponding to $\lambda$. 

We follow \cite[Sec.\ 3.3]{Davies-2007} in distinguishing between collections of vectors $\{f_j\}_{j=0}^\infty$ in a Hilbert space $\mathcal{H}$ which are complete, minimal complete, or bases. A set is complete if the closure of its linear span is all of $\mathcal{H}$; a minimal complete set is a complete set which is no longer complete if any one of the vectors is removed; and a basis is one where each $f \in \mathcal{H}$ admits a unique sequence of scalars $\{\alpha_j\}_{j=0}^\infty$ with
\[
	f = \lim_{n\rightarrow\infty} \sum_{j=0}^n \alpha_j f_j. 
\]

In constrast with the case where $T$ is normal, the spectral projections of a non-normal operator $T$ are not, in general, orthogonal. We study simple differential operators on the real line which admit minimal complete systems of eigenvectors $\{u_k\}_{k=0}^\infty$ which are not bases.  If an operator has a family of spectral projections with norms going to infinity, its eigenvectors cannot form a basis; see for instance \cite[Lem.\ 3.3.3]{Davies-2007}.  For more detailed information, we consider the question of asymptotics for the norms of spectral projections. Our operators act on the Hilbert space of functions $L^2(\R)$, with norm
\[
 	\|f\|_{L^2} = \left(\int_{-\infty}^\infty |f(x)|^2 \,dx\right)^{1/2}.
\]

Rapid growth of norms of spectral projections for non-self-adjoint differential operators is well-known.  In \cite[Thm.\ 6, 10]{Davies-2000}, Davies shows growth more rapid than any power of $k$ for the norms of spectral projections $\{P_k\}_{k=0}^\infty$ for the operators
\[
 	T = -\frac{d^2}{dx^2} + c|x|^m,
\]
acting on $L^2(\R)$, with $m > 0$, $c \in \C\backslash \R$, and $|\arg c| < C(m)$ some positive $m$-dependent constant. The exact exponential rate of spectral projection growth for the complex harmonic oscillators
\[
 	T = -\frac{d^2}{dx^2} + z^4x^2 = -\frac{d^2}{dx^2} + x^2 + (z^4-1)x^2, \quad |\opnm{arg} z| < \pi/4,
\]
also acting on $L^2(\R)$, was computed by Davies and Kuijlaars in \cite{DavKui2004}.  Exponential rates of growth for a wider variety of operators in one dimension were computed by Henry in \cite{Hen2012} and \cite{Hen2013},
including
\[
 	-\frac{d^2}{dx^2} + e^{i\theta}x^m,
\]
where either $m = 1$ or $m$ is an even integer with $m \geq 2$, and where $\theta$ obeys
 $$
 |\theta| < \min 
 \left\{
 \frac{(m+2)\pi}{4}, \frac{(m+2)\pi}{2m}
 \right\}.
 $$

In these cases, the eigenvalues of the operators are simple and have modulus tending towards $\infty$; write $\{\lambda_k\}_{k=0}^\infty$ for the eigenvalues ordered by increasing modulus and $P_k$ for the corresponding spectral projections.  The results in \cite{DavKui2004} and \cite{Hen2012} give exponential growth in $k$ of the $L^2(\R)$ operator norms of the projections $P_k$, which is of the form
\begin{equation}\label{max.gr}
 	\lim_{k\rightarrow \infty} \frac{1}{k}\log\|P_k\| = c
\end{equation}
for some $c \in (0, \infty)$.

Our focus here is to demonstrate the existence of natural classes of operators for which
\begin{equation}\label{e1GrowthsVarious}
 	\lim_{k\rightarrow \infty} \frac{1}{k^\sigma}\log\|P_k\| = c
\end{equation}
with $c \in (0, \infty)$ and where $\sigma$ can take any value in $(0,1)$. This can be found in Theorem \ref{t3.2}. Slower rates of growth, such as polynomial, may be obtained as well; see Theorems \ref{t3} and \ref{t3.2} and Corollaries \ref{cor:Pk.beta} and \ref{cor:Pk.beta.slow}. On the other hand, we have no examples demonstrating more rapid growth than \eqref{max.gr}.

These operators arise as non-symmetric perturbations $B$ of self-adjoint Schr{\"o}dinger operators $T$. The perturbations are relatively bounded with respect to the self-adjoint operator, meaning that
\begin{equation}\label{e1relb}
\begin{aligned}
& \Dom(T) \subset \Dom(B),\\
&	\forall f \in \Dom(T), \quad \|Bf\| \leq a \|f\| + b \|Tf\|, \quad  a,b \geq 0.
\end{aligned}
\end{equation}
The infimum of all possible $b$ is called the $T$-bound; see \cite[Sec.\ IV.1.1]{Kato-1966}.  The perturbations considered in Sections \ref{sec.ho} obey the stronger condition that they are $s$-subordinated to $T$, which is to say that
\begin{equation}\label{e1sub}
\begin{aligned}
& \Dom(T) \subset \Dom(B),\\
&	\forall f \in \Dom(T), \quad \|Bf\| \leq C\|Tf\|^s \|f\|^{1-s}, \quad 0 \leq s <1, \quad C > 0;
\end{aligned}
\end{equation}
see \cite[Sec.\ 1.5, 1.9]{Markus-1988}. Note that this property implies that $B$ is $T$-bounded with $T$-bound zero.

The relative boundedness of $B$ with respect to $T$ is important in several steps of our analysis, including showing that $L = T+B$, like the unperturbed operator $T$, has a compact resolvent and a complete system of root vectors, which we explicitly describe. In the applications to anharmonic oscillators, such as those given by the potentials $q(x)=|x|^\beta$, $\beta>2$, and specially chosen $B$, we even have $s$-subordination of $B$; see Remark \ref{rem:s.sub}.
For these operators, we have growth of spectral projection norms which is slower than the growths found in \cite{DavKui2004} and \cite{Hen2012}, where the skew-adjoint parts of the operators are not $s$-subordinated to the self-adjoint parts for any $0 \leq s < 1$. Finally, if the perturbations satisfy certain local subordination conditions, then the norms of spectral projections are bounded and the eigensystem of $L$ contains a Riesz basis; see
Section \ref{subsec:RB} for more details.

The plan of this paper is follows.  In Section \ref{sec.ho}, we consider first-order perturbations of the harmonic oscillator which arise naturally and may be understood via elementary applications of the Fourier transform and various facts about the Hermite functions. This section is less technical and could be read independently. Afterwards, in Section \ref{s3Monomial}, we introduce a family of perturbations of Schr\"odinger operators with single-well potentials growing more rapidly than $x^2$ as $x \to \infty$. These perturbations are crafted to give a new family of eigenfunctions which are multiples of the eigenfunctions of the Schr\"odinger operators, and asymptotics for these eigenfunctions give quite varied rates of growth.  Finally, in Section \ref{s4Remarks}, we indicate further generalizations, examples, and related results.

\begin{acknowledgements}
P.\ S.\ acknowledges the support of SCIEX-NMS Fellowship 11.263 and SNF Ambizione project PZ00P2\_154786.  
J.\ V.\ is grateful for the support of ANR NOSEVOL (Project number: ANR 2011 BS01019 01). The authors acknowledge the very pleasant working conditions at the ICMS Workshop \emph{Mathematical aspects of the physics with
non-self-adjoint operators}, held in March 2013 in Edinburgh, UK, during which this work was initiated.  The authors would also like to thank Andrei Mart\`inez-Finkelshtein for very valuable advice on references and calculations in Proposition \ref{p4Even}, Section \ref{s4Remarks}.  The authors would like to thank Rapha\"el Henry for bringing an important concern to our attention. Finally, the authors would like to express their gratitude to the anonymous referee for a careful reading and many helpful suggestions.

\end{acknowledgements}

\section{Perturbations of the harmonic oscillator}
\label{sec.ho}

\subsection{Definition of the operators}

Our first object of study is the harmonic oscillator in dimension one,
\begin{equation}\label{e2TDef}
\begin{aligned}
		T = -\frac{d^2}{dx^2} + x^2, \quad 
  \Dom(T) = \{ f \in W^{2,2}(\R) \::\: x^2f \in L^2(\R) \}
\end{aligned}
\end{equation}
and the non-self-adjoint operator
\begin{equation}
	L = T+B
\end{equation}
with the skew-adjoint perturbation
\begin{equation}\label{e2BDef}
\begin{aligned}
 	B &= 2iax, \quad a \in \R\backslash \{0\},\\
 \Dom(B) & =\{f \in L^2(\R): xf \in L^2(\R)\}.	
\end{aligned} 	
\end{equation}
See \cite[Sec.\ 3.7]{Davies-1995} for a discussion of the Sobolev spaces $W^{n,2}(\R)$.
Despite the $s$-subordination of $B$ to $T$, significant effects from the nonnormality of $L$ appear. Specifically, we show that the spectral projections of $L$ satisfy \eqref{e1GrowthsVarious} with $\sigma = 1/2$.

\subsection{Elementary facts}

The harmonic oscillator \eqref{e2TDef} is a well-studied operator which plays a fundamental role in physics as the model for a Schr{\"o}dinger operator near a minimum of the potential.  Its spectrum is the set of all positive odd integers,
\[
 	\opnm{Spec} (T) = \{2k +1 \::\: k=0, 1, 2,\dots \},
\]
with each point in the spectrum being an eigenvalue of multiplicity one. The eigenfunctions are also very well understood. The Hermite polynomials,
\begin{equation}\label{e2HkDef}
 	H_k(x) = e^{x^2/2}\Big(x-\frac{d}{dx}\Big)^k e^{-x^2/2}, \quad k=0, 1, 2, \dots,
\end{equation}
are polynomials of degree $k$.  Furthermore, each $H_k$ is either an odd or even function, depending on whether $k$ is odd or even.  An orthonormal basis for $L^2(\R)$ consisting of eigenfunctions of $T$ is given by the Hermite functions,
\begin{equation}\label{e2hkDef}
 	h_k(x) = \frac{1}{\sqrt{2^k k! \sqrt{\pi}}}H_k(x) e^{-x^2/2}, \quad k = 0, 1, 2,\dots,
\end{equation}
which obey
\[
 	Th_k = (2k+1)h_k, \quad k = 0, 1, 2, \dots.
\]
As $|x|\rightarrow \infty$, the Gaussian factor $e^{-x^2/2}$ gives very rapid decay to each fixed Hermite function, uniformly in any strip
\[
 	\{z \in \C \::\: |\Im z| \leq A\}, \quad A>0. 
\]
In more detail, if $|a| \leq A$, then 
\begin{equation}\label{hk.strip}
|h_k(x+i a)| \leq  e^{-(x^2 - A^2)/2} |H_k(x+ia)| 
\leq C(\varepsilon, k) \, e^{-((1-\eps)x^2 - A^2)/2},
\end{equation}
where $\varepsilon > 0$ can be taken arbitrarily small and $C(\varepsilon, k)>0$.

We also consider the action of the Fourier transform,
\[
	\mathcal{F}f(\xi) = \hat{f}(\xi) = \frac{1}{\sqrt{2\pi}} \int_{-\infty}^\infty e^{-ix\xi} f(x)\,dx.
\]
The Gaussian $G(x) = e^{-x^2/2}$ is invariant under the Fourier transform, $\hat{G}(\xi) = G(\xi)$, and conjugation with the Fourier transform interchanges multiplication by $x$ with $i \frac{d}{d \xi}$ and $-i \frac{d}{d x}$ with multiplication by $\xi$. Taking all this with \eqref{e2HkDef} and \eqref{e2hkDef} gives 
\begin{equation}\label{e2hkTildeSame}
	\hat{h}_k(\xi) = (-i)^k h_k(\xi).
\end{equation}
Moreover, the operator $L$ is unitarily equivalent to 
\begin{equation}\label{e2tildeBDef}
 \tilde L = T+\tilde B, \quad 	\tilde{B} = 2a \frac{d}{dx} .
\end{equation}

\subsection{Definition of the perturbed operator}
\label{subsec.def.ho.L}

We show that the perturbation $B$ is $s$-subordinated to $T$ as in \eqref{e1sub} in Lemma \ref{lem.ho.sub} below. (A more general statement is given in \cite[Prop.\ 7, Cor.\ 8]{AddMitDja2010}.) To begin, let us prove a fact about the graph norm of $T$. This equivalence is quite standard, and may be found in, for instance, \cite[Ex.\ 7.2.4]{Blank-Exner-Havlicek-2008}.

\begin{lemma}\label{lem.ho.grn}
There exist positive constants $k_1,k_2$ such that, for all $f\in \Dom(T)$, 
\begin{equation}\label{ho.grn.eq}
\begin{aligned}
k_1 \left(
\|f''\|^2 + \|x^2 f\|^2 + \|f\|^2 
\right) 
&\leq 
\|Tf\|^2 + \|f\|^2 \\
&\leq
k_2 \left(
\|f''\|^2 + \|x^2 f\|^2 + \|f\|^2 
\right).
\end{aligned}
\end{equation}
\end{lemma}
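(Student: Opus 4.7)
The upper bound is immediate: by the triangle inequality, $\|Tf\| \le \|f''\| + \|x^2 f\|$, so $\|Tf\|^2 \le 2(\|f''\|^2 + \|x^2f\|^2)$, and adding $\|f\|^2$ on both sides gives the right-hand inequality with $k_2 = 2$.

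For the lower bound, the plan is to derive an exact identity on the Schwartz class $\mathcal{S}(\R)$ and then extend by density, using that $\mathcal{S}(\R)$ is a core for $T$. For $f \in \mathcal{S}(\R)$, I would expand
\[
    \|Tf\|^2 = \|{-}f'' + x^2 f\|^2 = \|f''\|^2 + \|x^2 f\|^2 - 2\opnm{Re}\langle f'', x^2 f\rangle
\]
and evaluate the cross term by integration by parts. First, $\langle f'', x^2 f\rangle = -\langle f', (x^2 f)'\rangle = -\|xf'\|^2 - 2\langle f', x f\rangle$, and then $2\opnm{Re}\langle f', xf\rangle = \int x \frac{d}{dx}|f|^2\,dx = -\|f\|^2$. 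Combining these, the cross term contributes $2\|xf'\|^2 - 2\|f\|^2$, yielding the key identity
\[
    \|Tf\|^2 = \|f''\|^2 + \|x^2 f\|^2 + 2\|xf'\|^2 - 2\|f\|^2.
\]

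Dropping the nonnegative term $2\|xf'\|^2$ gives $\|f''\|^2 + \|x^2 f\|^2 \le \|Tf\|^2 + 2\|f\|^2$, and adding $\|f\|^2$ to both sides yields
\[
    \|f''\|^2 + \|x^2 f\|^2 + \|f\|^2 \le \|Tf\|^2 + 3\|f\|^2 \le 3(\|Tf\|^2 + \|f\|^2),
\]
so the lower bound holds on $\mathcal{S}(\R)$ with $k_1 = 1/3$.

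The one point requiring care is passing from $\mathcal{S}(\R)$ to all of $\Dom(T)$. Given $f \in \Dom(T)$, one chooses Schwartz $f_n \to f$ in the graph norm, which exists since $\mathcal{S}(\R)$ is a core for $T$. Applied to $f_n - f_m$, the bound $\|g''\|^2 + \|x^2 g\|^2 \le \|Tg\|^2 + 2\|g\|^2$ shows that $\{f_n''\}$ and $\{x^2 f_n\}$ are Cauchy in $L^2(\R)$; by distributional convergence their limits must be $f''$ and $x^2 f$. Passing to the limit in the Schwartz inequality yields the stated bound for every $f \in \Dom(T)$. The main obstacle is thus not the integration by parts, which is routine once one starts on $\mathcal{S}(\R)$, but rather this density/closability step, which is standard for the harmonic oscillator.
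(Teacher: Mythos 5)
Your proof is correct, and for the lower bound it takes a cleaner route than the paper's own argument for this lemma. Both start from the expansion $\|Tf\|^2 = \|f''\|^2 + \|x^2f\|^2 - 2\Re\langle f'', x^2 f\rangle$, but the paper stops after one integration by parts and bounds the remaining cross term by $4\Re\langle f',xf\rangle \geq -2\|f'\|^2 - 2\|xf\|^2$; it then needs two extra absorptions (completing the square for $\|x^2f\|^2 - 4\|xf\|^2$ and a Fourier-side analogue for $\|f''\|^2 - 4\|f'\|^2$) and ends with $k_1 = 1/9$. You instead integrate by parts a second time to evaluate $2\Re\langle f', xf\rangle = -\|f\|^2$ exactly, which collapses everything to the identity $\|Tf\|^2 = \|f''\|^2 + \|x^2f\|^2 + 2\|xf'\|^2 - 2\|f\|^2$; dropping the nonnegative term $2\|xf'\|^2$ immediately gives the bound with the better constant $k_1 = 1/3$ and no Fourier step. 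In fact your identity is precisely the $V=x^2$ case of \eqref{e3TfNorm}, the identity the paper does use in the more general Lemma \ref{lem.beta.grn}, so your argument is the one that generalizes directly to the anharmonic setting. Your extra care with the domain (proving the identity on $\mathcal{S}(\R)$ and extending by a core/density argument, identifying the limits of $f_n''$ and $x^2 f_n$ distributionally) is sound and addresses a point the paper passes over by integrating by parts directly on $\Dom(T)$; the only ingredient you invoke without proof, that $\mathcal{S}(\R)$ is a core for $T$ on the stated domain, is indeed classical (essential self-adjointness of the harmonic oscillator), so there is no gap.
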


\begin{remark}\label{rem.ho.dom}
 	As a consequence of Lemma \ref{lem.ho.grn}, we see that we can equivalently write
	\[
	 	\Dom(T) = \{f \in L^2(\R)\::\: Tf \in L^2(\R)\}.
	\]
\end{remark}

\begin{proof}
We directly compute that
\[
 	\|Tf\|^2 = \|f''\|^2 + \|x^2 f\|^2 - 2\Re \langle f'', x^2f\rangle.
\]
By the Cauchy-Schwarz inequality, we immediately have the right-hand side inequality in \eqref{ho.grn.eq} with $k_2 = 2$.  Using integration by parts,
\[
 	\|Tf\|^2 = \|f''\|^2 + \|x^2 f\|^2 + 2\|xf'\|^2 + 4 \Re \langle f', xf\rangle \geq \|f''\|^2 + \|x^2 f\|^2 - 2\|f'\|^2 - 2\|xf\|^2.
\]
Completing the square shows that
\[
 	\|x^2f\|^2 - 4\|xf\|^2 = \|(x^2-2)f\|^2 - 4\|f\|^2 \geq -4\|f\|^2,
\]
and using the Fourier transform gives the corresponding fact that, when $f \in W^{2,2}(\R)$,
\[
 	\|f''\|^2 - 4\|f'\|^2  = \|\xi^2\hat{f}\|^2 - 4\|\xi \hat{f}\|^2 \geq -4\|f\|^2.
\]
Therefore, for all $f \in \Dom(T)$,
\begin{align*}
 	\|Tf\|^2 &+ \|f\|^2 \geq \frac{2}{9}\|Tf\|^2 + \|f\|^2 
	\\ & \geq \frac{1}{9}(\|f''\|^2 + \|x^2f\|^2) + \frac{1}{9}(\|x^2f\|^2 - 4\|xf\|^2 + \|f''\|^2 - 4\|f'\|^2) + \|f\|^2
	\\ & \geq \frac{1}{9}(\|f''\|^2 + \|x^2f\|^2+\|f\|^2),
\end{align*}
proving the left-hand side inequality in \eqref{ho.grn.eq} with $k_1 = 1/9$.
\end{proof}

\begin{lemma}\label{lem.ho.sub}
Let $T$ and $B$ be the operators defined in \eqref{e2TDef} and \eqref{e2BDef}. Then $B$ is $s$-subordinated to $T+1$ for $s = 1/2$; that is, there exists $C > 0$ such that
\begin{equation}\label{ho.oper.sub}
\forall f \in \Dom(T), \quad 
\|Bf\| 
\leq
C \|(T+1)f\|^{1/2} \|f\|^{1/2}.
\end{equation}
\end{lemma}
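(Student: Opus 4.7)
The plan is to reduce the inequality to a bound on the quadratic form of $T+1$, and then apply Cauchy--Schwarz. Since $B = 2iax$, we have $\|Bf\| = 2|a|\,\|xf\|$, so it suffices to show that $\|xf\|^2 \le \|(T+1)f\|\,\|f\|$ for every $f \in \Dom(T)$.

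First, I would verify that $xf \in L^2(\R)$ whenever $f \in \Dom(T)$: since $|x f| \le |f| + |x^2 f|$ pointwise, this follows from $f \in L^2$ and $x^2 f \in L^2$. Similarly $f' \in L^2$ because $\Dom(T) \subset W^{2,2}(\R)$. These two facts allow an integration by parts with vanishing boundary terms to give the standard quadratic-form identity
\begin{equation*}
\langle (T+1) f, f\rangle = \|f'\|^2 + \|x f\|^2 + \|f\|^2,
\end{equation*}
whose positivity immediately implies $\|xf\|^2 \le \langle (T+1)f, f\rangle$.

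Next, the Cauchy--Schwarz inequality yields $\langle (T+1)f, f\rangle \le \|(T+1)f\|\,\|f\|$, so combining the two bounds gives
\begin{equation*}
\|xf\|^2 \le \|(T+1)f\|\,\|f\|,
\end{equation*}
and taking square roots and multiplying by $2|a|$ produces \eqref{ho.oper.sub} with $C = 2|a|$.

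There is essentially no obstacle here; the only delicate point is the justification of integration by parts on the core $\Dom(T)$, which can if desired be made rigorous by a density argument using the Hermite functions $\{h_k\}$, but the pointwise inequality $|xf| \le |f| + |x^2 f|$ together with $f \in W^{2,2}(\R)$ already suffices to make all integrals absolutely convergent and the boundary terms vanish.
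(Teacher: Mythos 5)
Your proof is correct and takes a genuinely different route from the paper. The paper first invokes the interpolation bound $\|xf\| \le \|x^2 f\|^{1/2}\|f\|^{1/2}$ (Cauchy--Schwarz applied to the measure $|f|^2\,dx$), then translates $\|x^2 f\|$ into a bound by $\|(T+1)f\|$ via the graph-norm equivalence of Lemma~\ref{lem.ho.grn} together with the observation $\|Tf\|^2 + \|f\|^2 \le \|(T+1)f\|^2$. You instead compute the quadratic form directly, obtaining $\langle (T+1)f, f\rangle = \|f'\|^2 + \|xf\|^2 + \|f\|^2$ by integration by parts, discard the favorable terms to get $\|xf\|^2 \le \langle (T+1)f, f\rangle$, and finish with Cauchy--Schwarz in the form $\langle (T+1)f, f\rangle \le \|(T+1)f\|\,\|f\|$. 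Your route bypasses Lemma~\ref{lem.ho.grn} entirely, is shorter, and yields the explicit constant $C = 2|a|$; what the paper's approach buys is that it rests on a graph-norm equivalence that is reused and generalized elsewhere (cf.\ Lemma~\ref{lem.beta.grn} and Lemma~\ref{lem.beta.sub}), so the argument there is structured to scale to the anharmonic case, where the quadratic form of $T$ is less convenient to exploit directly. Your justification of the integration by parts is fine: $f \in W^{2,2}(\R)$ forces $f, f' \to 0$ at infinity by Sobolev embedding, the pointwise inequality $|xf| \le |f| + |x^2 f|$ gives $xf \in L^2(\R)$, so all integrals converge absolutely and the boundary terms vanish.
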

\begin{proof}
We note that, since $T$ is clearly a positive-definite operator, we have
\begin{equation*}
\|(T+1)f\|^2 = \|Tf\|^2 + \|f\|^2 + 2 \langle Tf, f\rangle 
\geq \|Tf\|^2 + \|f\|^2.
\end{equation*}
Therefore, using the equivalence of norms \eqref{ho.grn.eq},
\begin{align*}
\|xf\| &\leq \|x^2f\|^{1/2}\|f\|^{1/2} 
\leq 
\left(
\|f''\|^2 + \|x^2f\|^2 + \|f\|^2
\right)^{1/4}\|f\|^{1/2} \\
&\leq
C \|(T+1)f\|^{1/2} \|f\|^{1/2}. 
\qedhere
\end{align*}
\end{proof}

It follows immediately from Lemma \ref{lem.ho.sub} that $B$ is also relatively bounded with respect to $T$ with $T$-bound equal to zero, that is, for every $\varepsilon >0$ there exists a positive constant $C(\varepsilon)$ such that
\begin{equation*}
\forall f \in \Dom(T), \quad 
\|Bf\| 
\leq
\varepsilon \|Tf\| + C(\varepsilon) \|f\|.
\end{equation*}
Hence, the operator sum 
\begin{equation}\label{e2LDef}
\begin{aligned}
L & = T + B, \quad 
\Dom(L) = \Dom(T).
\end{aligned}
\end{equation}
is a closed operator; see \cite[Thm.\ IV.1.1]{Kato-1966}. 
It also follows easily that $L$ has a compact resolvent, by recalling that the resolvent of $T$ is compact and applying \cite[Thm.\ IV.1.16]{Kato-1966} considering $L+r$ as a perturbation of $T+r$ for $r > 0$ sufficiently large.

It is also easy to see that the adjoint of $L$ may be written
\begin{equation}\label{L.ho.act}
\begin{aligned}
L^* &= -\frac{d^2}{dx^2} + x^2  - 2 i a x, \quad 
\Dom(L^*) & = \Dom(T).
\end{aligned}
\end{equation}

\subsection{Characterization of the eigenfunctions}\label{ss2char}

Define
\begin{equation}\label{fk.gk.def}
	f_k(x) = h_k(x+ia), \qquad
	g_k(x) = h_k(x-ia),
\end{equation}
using the fact that the Hermite functions $h_k(x)$ are entire functions.

\begin{lemma}\label{l2Eigenfunctions}
	The functions $f_k$, $g_k$ defined in \eqref{fk.gk.def} belong to 
	\[
		\Dom(L) = \Dom(L^*) =\Dom(T),
	\]
	defined in \eqref{e2TDef}, and satisfy the eigenfunction relations
	\begin{equation}\label{e2fkEigs}
	\begin{aligned}
	Lf_k &= (2k +1 + a^2) f_k, \\
	L^*g_k &= (2k + 1 + a^2) g_k.
	\end{aligned}
	\end{equation}
	Furthermore, 
	\begin{equation}\label{e2fkgkNorms}
		\|f_k\| = \|g_k\| = \|e^{ax}h_k\|.
	\end{equation}
\end{lemma}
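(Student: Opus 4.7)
The plan is to verify in order (i) membership in $\Dom(T)$, (ii) the eigenfunction equations, and (iii) the norm identity. The guiding observation is that each $h_k$ is an entire function of the form $c_k H_k(z) e^{-z^2/2}$ with $H_k$ a polynomial, so $f_k$ and $g_k$ differ from $h_k$ only by purely imaginary shifts of the argument.

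For membership in the domain, I would rewrite
\[
f_k(x) = c_k e^{a^2/2} e^{-iax} H_k(x+ia) e^{-x^2/2},
\]
which displays $f_k$ as a Schwartz function (a polynomial in $x$ times a bounded oscillation times $e^{-x^2/2}$), so $f_k \in \Dom(T)$; the same expansion with $-a$ in place of $a$ handles $g_k$. Since $\Dom(L) = \Dom(L^*) = \Dom(T)$ was already established in \eqref{e2LDef} and \eqref{L.ho.act}, this disposes of the first claim.

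For the eigenfunction equations, the key input is the Hermite ODE $-h_k''(y) + y^2 h_k(y) = (1+2k) h_k(y)$, which holds for every $y \in \C$ since $h_k$ is entire. Evaluating at $y = x+ia$ and observing that $(D^2 f_k)(x) = -h_k''(x+ia)$ gives
\[
\bigl(D^2 + (x+ia)^2\bigr) f_k = (1+2k) f_k.
\]
Expanding $(x+ia)^2 = x^2 + 2iax - a^2$ and rearranging produces $L f_k = (1+2k+a^2) f_k$. The verification of $L^* g_k = (1+2k+a^2)g_k$ is identical with $a$ replaced by $-a$ throughout.

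For the norm identity, the cleanest route is via Plancherel. Changing variables $y = x+ia$ in the definition of $\mathcal{F} f_k(\xi)$ leads to an integral of $e^{-i(y-ia)\xi} h_k(y)$ along the horizontal contour $\Im y = a$; shifting this contour back to the real axis --- which is permissible because the uniform Gaussian decay of $h_k$ in strips \eqref{hk.strip} makes the vertical boundary contributions vanish --- gives
\[
\mathcal{F} f_k(\xi) = e^{-a\xi}\, \tilde{h}_k(\xi) = (-i)^k e^{-a\xi} h_k(\xi),
\]
using \eqref{e2hkTildeSame}. Plancherel then yields $\|f_k\|^2 = \int_{-\infty}^\infty e^{-2a\xi} h_k(\xi)^2\,d\xi$, and since each $h_k$ has definite parity the function $h_k^2$ is even, so the substitution $\xi \mapsto -\xi$ shows this equals $\int_{-\infty}^\infty e^{2a\xi} h_k(\xi)^2\,d\xi = \|e^{ax}h_k\|^2$. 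Running the same calculation with $-a$ in place of $a$ produces the identical value for $\|g_k\|$. The only non-routine step in the whole argument is the contour shift, which is where the Hermite decay estimate \eqref{hk.strip} plays an essential role; everything else reduces to direct algebra or the Hermite eigenvalue equation.
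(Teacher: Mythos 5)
Your proposal is correct and follows essentially the same route as the paper's proof: the completing-the-square identity $L = D^2 + (x+ia)^2 + a^2$ together with the (analytically continued) Hermite eigenvalue equation for \eqref{e2fkEigs}, and the contour shift justified by \eqref{hk.strip} plus the parity of $h_k$ and \eqref{e2hkTildeSame} for \eqref{e2fkgkNorms}. Your treatment of domain membership via the explicit factorization $f_k(x) = c_k e^{a^2/2} e^{-iax} H_k(x+ia) e^{-x^2/2}$ is simply a more detailed version of the paper's ``simple calculations and estimates like \eqref{hk.strip}.''
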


\begin{proof}
	The first claim follows from simple calculations and estimates like \eqref{hk.strip}.
	We note that
	\[
		L = -\frac{d^2}{dx^2} + x^2 + 2iax = -\frac{d^2}{dx^2} + (x+ia)^2 + a^2,
	\]
	and similarly for $L^*$, hence \eqref{e2fkEigs} follows.  
	
	The norm characterization \eqref{e2fkgkNorms} follows from taking the Fourier transform.  We are allowed to make a complex change of variables given the decay estimate \eqref{hk.strip} and that $h_k$ is holomorphic on all of $\C$.  Therefore
	\[
		\hat{f}_k(\xi) = \frac{1}{\sqrt{2\pi}}\int_{-\infty}^\infty e^{-ix\xi}h_k(x+ia)\,dx = \frac{1}{\sqrt{2\pi}}\int_{-\infty}^\infty e^{-i(x-ia)\xi}h_k(x)\,dx = e^{-a\xi}\hat{h}_k(\xi).
	\]
	Each $h_k$ is either odd or even, and by \eqref{e2hkTildeSame}, we obtain \eqref{e2fkgkNorms} for $f_k$.  The same statement for $g_k$ follows from replacing $a$ with $-a$.
\end{proof}

\begin{lemma}\label{l2Complete}
	The collections $\{f_k\}$ and $\{g_k\}$ defined in \eqref{fk.gk.def} are complete.  For $L$ defined in \eqref{e2LDef},
	\[
		\opnm{Spec} (L) = \opnm{Spec} (L^*) = \{2k + 1 + a^2 \::\: k = 0, 1, 2, \dots\},
	\]
	each point being a simple eigenvalue, and thus every root vector of $L$ is a multiple of an $f_k$ and every root vector of $L^*$ is a multiple of a $g_k$.
\end{lemma}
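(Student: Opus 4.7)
The plan is to establish, in order, four facts: biorthogonality $\langle f_k, g_j\rangle = \delta_{kj}$; completeness of each of $\{f_k\}$ and $\{g_k\}$; equality of the spectrum of $L$ (and of $L^*$) with $\{1+2k+a^2\}$; and algebraic simplicity of each eigenvalue together with the characterization of root vectors.

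For biorthogonality, I use that each $h_j$ is entire and real on $\R$, so the Schwarz reflection principle gives $\overline{h_j(x-ia)} = h_j(x+ia)$, whence
\[
 \langle f_k, g_j\rangle = \int_{\R} h_k(x+ia)\,h_j(x+ia)\,dx.
\]
The pointwise bound \eqref{hk.strip} shows that $h_k h_j$ decays rapidly, uniformly in the strip $|\Im z|\le a$, so Cauchy's theorem applied to a large rectangle lets me shift the contour back to $\R$, where the integral equals $\int_{\R} h_k(x) h_j(x)\,dx = \delta_{kj}$.

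For completeness of $\{f_k\}$, I write
\[
 f_k(x) = c_k e^{a^2/2}\, e^{-iax}\,\bigl(H_k(x+ia)\,e^{-x^2/2}\bigr), \qquad c_k = \frac{1}{\sqrt{2^k k!\sqrt{\pi}}}.
\]
Since $H_k(x+ia)$ is a polynomial of exact degree $k$ in $x$, the linear span of $\{H_k(x+ia)e^{-x^2/2}\}_{k\ge 0}$ coincides with that of $\{H_k(x)e^{-x^2/2}\}_{k\ge 0}$, which is dense in $L^2(\R)$ because the Hermite functions form an orthonormal basis. Multiplication by the unimodular factor $e^{-iax}$ is a unitary operator on $L^2(\R)$, so it preserves density and $\{f_k\}$ is complete; the argument for $\{g_k\}$ is identical after replacing $a$ by $-a$. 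This step is the main technical point of the proof, since the remaining claims will follow algebraically.

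For the spectrum and root vectors, recall that $L$ has compact resolvent (Section \ref{subsec.def.ho.L}), so its spectrum consists entirely of isolated eigenvalues. If $L\phi = \mu\phi$ with $\phi \ne 0$, then the reality of $\lambda_k = 1+2k+a^2$ gives
\[
 \mu\,\langle \phi, g_k\rangle = \langle L\phi, g_k\rangle = \langle \phi, L^* g_k\rangle = \lambda_k\,\langle \phi, g_k\rangle,
\]
forcing $\langle \phi, g_k\rangle = 0$ whenever $\mu \ne \lambda_k$; completeness of $\{g_k\}$ then rules out any $\mu \notin \{\lambda_k\}$, and combined with Lemma \ref{l2Eigenfunctions} this yields the stated spectrum (and, by symmetry, for $L^*$). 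Iterating the same identity gives $\langle (L-\lambda_k)^n\phi, g_j\rangle = (\lambda_j-\lambda_k)^n\langle \phi, g_j\rangle$, so any root vector for $\lambda_k$ is orthogonal to every $g_j$ with $j \ne k$. Biorthogonality together with $\langle f_k, g_k\rangle = 1$ prevents $g_k$ from lying in $\overline{\opnm{span}}\{g_j : j\ne k\}$, so $\{g_j\}$ is minimal complete; combined with completeness this implies $\{g_j : j\ne k\}^{\perp} = \C f_k$, whence $\lambda_k$ is simple and every root vector of $L$ at $\lambda_k$ is a scalar multiple of $f_k$, with the symmetric statement holding for $L^*$ and $g_k$.
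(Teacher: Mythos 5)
Your proposal is correct, and both of its main steps take genuinely different routes from the paper. For completeness of $\{f_k\}$, the paper passes to the Fourier side, derives the moment conditions $\int \tilde v(\xi)\,\xi^k e^{-a\xi-\xi^2}\,d\xi=0$, and concludes by observing that the inverse transform of the rapidly decaying $\tilde v\,e^{-a\xi-\xi^2}$ is entire with a vanishing Taylor series at the origin. You instead peel off the unitary factor $e^{-iax}$ from $f_k = c_k e^{a^2/2}e^{-iax}\bigl(H_k(x+ia)e^{-x^2/2}\bigr)$ and observe that $\{H_k(x+ia)\}_{k\le n}$ and $\{H_k(x)\}_{k\le n}$ have identical linear spans (each being the polynomials of degree $\le n$, since the leading coefficient of $H_k$ is unaffected by the shift), so density is inherited directly from the Hermite basis. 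This is more elementary — no complex analysis beyond the Schwarz reflection used for biorthogonality. For the spectrum and multiplicity, the paper works entirely inside the Riesz-projection calculus ($QP_k=0$ for disjoint spectral sets, density of $\opnm{span}\{f_j\}$ kills $Q$ and forces $\dim\opnm{Ran}P_k=1$), whereas you establish biorthogonality first and then use the identity $\langle(L-\lambda_k)^n\phi,g_j\rangle=(\lambda_j-\lambda_k)^n\langle\phi,g_j\rangle$ — valid because the $\lambda_j$ are real and $g_j\in\Dom(L^*)$ — plus the linear-algebra fact that completeness of $\{g_j\}$ together with $\langle f_k,g_k\rangle=1$ forces $\{g_j:j\ne k\}^\perp=\C f_k$. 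Both are sound; yours avoids invoking the algebra of Cauchy-integral projections at the cost of needing biorthogonality up front (which the paper defers to Section~\ref{ss2Comp} anyway and calls "obvious"). One cosmetic point: your contour shift tacitly assumes $a>0$; for $a<0$ one shifts to $\R-i|a|$, but this is clearly symmetric and not a gap.
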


\begin{proof}
	Interchanging $a$ with $-a$ interchanges $L$ and $L^*$, so we need only consider the claim about $L$ and the system $\{f_k\}$. Let us prove that $(\{f_k\}_{k=0}^\infty)^\perp = \{0\}$.  Consider $v \in (\{f_k\}_{k = 0}^\infty)^\perp$, meaning that, for each $k$,
	\[
	0=\langle f_k, v\rangle = \langle \hat{f}_k, \hat{v}\rangle = (-i)^k \int_{-\infty}^\infty H_k(\xi) e^{-\xi^2/2 - a\xi}\overline{\hat{v}(\xi)}\,d\xi.
	\]
	The linear span of the Hermite polynomials $\{H_k\}$ defined in \eqref{e2HkDef} forms the set of all polynomials. Therefore, for every $k \geq 0$,
\begin{equation}\label{e2ZeroMomenta}
 	\int_{-\infty}^\infty \overline{\hat{v}(\xi)} \xi^k e^{-a\xi-\xi^2/2}\,d\xi = 0.
\end{equation}

	Since the function 
	\[
		\hat{w}(\xi) = \overline{\hat{v}(\xi)}e^{-a\xi-\xi^2/2}
	\]
	decays rapidly near infinity in any strip $\{|\Im\xi| \leq A\}$, its inverse Fourier transform
	\[
		\mathcal{F}^{-1}\hat{w}(x) = w(x) = \frac{1}{\sqrt{2\pi}}\int_{-\infty}^\infty e^{ix\xi}\hat{w}(\xi)\,d\xi
	\]
	may be extended to a holomorphic function on all of $\C$.  But \eqref{e2ZeroMomenta} shows that
	\[
		\frac{d^k w}{dx^k} (0) = 0, \quad k = 0, 1, 2, \dots.
	\]
	Since $w(x)$ is everywhere equal to its Taylor series, $w = 0$ on all of $\R$.  We conclude that $v = 0$, as claimed.

	We next show that the completeness of $\{f_k\}_{k=0}^\infty$ implies that there can be no other root vectors of $L$.  Let $P_{k}$ denote the spectral projection associated with the eigenvalue $2k+1+a^2$, so
	\[
	 	P_{k} f_k = f_k.
	\]
	Assume that there exists some $\mu \in \opnm{Spec}(L) \backslash \{2k + 1 +a^2 \, : \, k= 0,1,2, \dots \}$, and write $Q$ for the associated spectral projection.  Then $QP_k = 0$ and $Qf_k = QP_kf_k = 0$ for each $k$, and so by completeness of $\{f_k\}$ we see that $Q = 0$, a contradiction.

	We may show similarly that the algebraic multiplicity of each eigenvalue is one, since if $\opnm{dim}\opnm{Ran} P_k > 1$ and $g \in \opnm{Ran} P_k \backslash \{0\}$ is orthogonal to $f_k$, the continuous linear functional $w \mapsto \langle P_k w, g\rangle$ is zero on the linear span of $\{f_k\}_{k=0}^\infty$ which is dense in $L^2(\R)$. Therefore $g = 0$, a contradiction.
\end{proof}

\subsection{Computation of spectral projection norms}\label{ss2Comp}

We now compute the norms of the spectral projections of $L$.  It is obvious that $\langle f_j, g_k\rangle = \delta_{jk}$, using the Kronecker delta. By Lemma \ref{l2Complete} the spectral projection $P_k$ for $L$ defined in \eqref{e2LDef} associated with $\lambda_k$ has the representation
\begin{equation}\label{e2ProjAsIP}
	P_k u = \langle u, g_k\rangle f_k,
\end{equation}
and so
\begin{equation}\label{e2ProjAsNorms}
	\|P_k\| = \|f_k\| \|g_k\|.
\end{equation}

\begin{theorem}\label{t2}
	Let $P_k$ denote the spectral projection for $L$, defined in \eqref{e2LDef} with $T$ from \eqref{e2TDef} and $B$ from \eqref{e2BDef}, for the eigenvalue $\lambda_k = 2k +1+a^2$.  Then we have the asymptotic formula
	\begin{equation}\label{e2Precise}
	 	\|P_k\| = \frac{1}{2(2k)^{1/4}\sqrt{|a|\pi}}\exp\left(2^{3/2}|a|\sqrt{k}\right)\left(1+\BigO(k^{-1/2})\right), \quad k \rightarrow \infty,
	\end{equation}
	from which
	\[
	 	\lim_{k\rightarrow\infty} \frac{1}{\sqrt{k}}\log\|P_k\| = 2^{3/2}|a|.
	\]
	All these statements are also true for $\tilde L = T+\tilde{B}$, with $\tilde{B}$ defined in \eqref{e2tildeBDef}.
\end{theorem}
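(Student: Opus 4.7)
The plan is to exploit the rank-one structure of the spectral projection $P_k$ identified in Lemma \ref{l2Complete}. Combining \eqref{e2ProjAsNorms} with \eqref{e2fkgkNorms} reduces the theorem to understanding the growth, as $k\to\infty$, of the scalar quantity
\[
\|P_k\|=\|f_k\|\,\|g_k\|=\|e^{ax}h_k\|^2=\frac{1}{2^{k}k!\sqrt{\pi}}\int_{-\infty}^{\infty}H_k(x)^2\,e^{-x^2+2ax}\,dx,
\]
where the last equality uses the definition of $h_k$ in \eqref{e2hkDef} and the reality of the Hermite functions on $\R$.

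My first step would be to evaluate this integral in closed form. Using the bilinear generating series
\[
\sum_{n,m\geq 0}\frac{H_n(x)H_m(x)}{n!\,m!}\,s^{n}t^{m}=e^{2x(s+t)-s^{2}-t^{2}},
\]
multiplying by $e^{-x^{2}+2ax}$, integrating in $x$ by completing the square, and extracting the diagonal coefficient of $s^{k}t^{k}$ from the resulting expression $\sqrt{\pi}\,e^{a^{2}}e^{2st+2a(s+t)}$, one recognises the Laguerre polynomial $L_{k}(z)=\sum_{j=0}^{k}\binom{k}{j}(-z)^{j}/j!$ and obtains
\[
\int_{-\infty}^{\infty}H_k(x)^2\,e^{-x^{2}+2ax}\,dx=\sqrt{\pi}\,2^{k}k!\,e^{a^{2}}\,L_k(-2a^{2}).
\]
Combining these two displays yields the exact identity $\|P_k\|=e^{a^{2}}L_k(-2a^{2})$, valid for every $k\geq 0$.

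Next I would insert the Perron asymptotic formula for Laguerre polynomials at a negative argument (see, e.g., Szeg\H{o}, \emph{Orthogonal Polynomials}, \S 8.22),
\[
L_k(-t)=\frac{e^{-t/2}}{2\sqrt{\pi}\,t^{1/4}\,k^{1/4}}\exp\bigl(2\sqrt{kt}\bigr)\bigl(1+\BigO(k^{-1/2})\bigr),\qquad t>0,
\]
and specialise to $t=2a^{2}$, so that $t^{1/4}=2^{1/4}|a|^{1/2}$ and $2\sqrt{kt}=2^{3/2}|a|\sqrt{k}$. The factor $e^{a^{2}}$ cancels $e^{-t/2}=e^{-a^{2}}$, leaving precisely the prefactor $1/(2(2k)^{1/4}\sqrt{|a|\pi})$ of \eqref{e2Precise}; taking logarithms then yields the stated limit for $k^{-1/2}\log\|P_k\|$.

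Finally, the conclusion for $\tilde L$ is immediate from the unitary equivalence $L\simeq \tilde L$ already recorded in \eqref{e2tildeBDef}, since unitary conjugation preserves operator norms of spectral projections. The main obstacle in the plan is not conceptual but bookkeeping: the Perron formula is classical and can simply be cited, and the generating-function step is a short symbolic calculation, so the real work is tracking the powers of $2$, $|a|$, and $\pi$ needed to match the explicit prefactor in \eqref{e2Precise}.
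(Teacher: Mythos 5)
Your proposal is correct and follows essentially the same route as the paper: it reduces $\|P_k\|=\|f_k\|\,\|g_k\|$ to the exact identity $\|P_k\|=e^{a^2}L_k(-2a^2)$ and then applies the Perron--Szeg\H{o} asymptotics for Laguerre polynomials at negative argument, handling $\tilde L$ by the Fourier unitary equivalence. The only cosmetic difference is that you derive the Hermite--Laguerre integral from the generating function $\sum_n H_n(x)t^n/n!=e^{2xt-t^2}$, whereas the paper cites the tabulated formula \cite[Formula 7.374.7]{GraRyz2000}; both yield the same identity \eqref{e2HermiteToLaguerre}.
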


\begin{remark}\label{r2Rea}
 	We may perform a very similar analysis when replacing $a \in \R$ with $a+ib \in \C$, with the sole change being that the eigenfunctions are 
 	\[
 		f_k(x) = h_k(x + i(a + ib)), \quad g_k(x) = h_k(x - i(a+ib))
 	\]
 	and the eigenvalues are $\{2k+ 1+ (a+ib)^2, k\geq 0\}$.  Since $\|e^{(a+ib)x}h_k(x)\|_{L^2} = \|e^{ax}h_k(x)\|_{L^2}$, we have the same asymptotics for the spectral projection growth at eigenvalues $1+2k+(a+ib)^2$ depending only on the real part, $a$.
 	
 	As discussed in greater generality in Section \ref{secNonOdd}, this also may be seen directly from conjugation on the Fourier transform side with a unitary multiplication operator, $e^{ib\xi}$.  That is, if we had
 	\[
 		\tilde{L} = -\frac{d^2}{d\xi^2} + \xi^2 + 2(a+ib) \frac{d}{d\xi},
 	\]
 	then, for any $u \in L^2$,
 	\[
 		e^{ib\xi}\tilde{L}e^{-ib\xi}u(\xi) = -\frac{d^2}{d\xi^2} u(\xi) + \xi^2 u(\xi) + 2a\frac{d}{d\xi} u(\xi) + 2iab - b^2.
 	\]
 	The aforementioned asymptotics (depending only on $a$) follow from Theorem \ref{t2} and the observation that $(a+ib)^2 = a^2 + 2iab - b^2$, confirming the identification of the spectrum.
\end{remark}

\begin{proof}

From Lemma \ref{l2Eigenfunctions} and \eqref{e2ProjAsNorms}, we have that
\[
 	\|P_k\| = \int_{-\infty}^\infty e^{2ax} h_k(x)^2\,dx.
\]
One measure of correlation between Hermite polynomials is given in \cite[Formula 7.374.7]{GraRyz2000}: for $n \geq m$,
\[
 	\int_{-\infty}^\infty e^{-(x-y)^2}H_m(x) H_n(x)\,dx = 2^n \sqrt{\pi} m! y^{n-m}L^{n-m}_m(-2y^2).
\]
Here, $L^n_m$ are the Laguerre polynomials \cite[Chap.\ V]{Sze1959}. 
  This effectively tests the portion of $H_m(x)H_n(x)$ localized, by a Gaussian, to a neighborhood of $y$.  We choose $n = m = k$ and note that
\begin{equation}\label{e2HermiteToLaguerre}
 	\int_{-\infty}^\infty e^{2ax}h_k(x)^2\,dx = \frac{e^{a^2}}{2^k k! \sqrt{\pi}}\int_{-\infty}^\infty e^{-(x-a)^2}H_k(x)^2\,dx = e^{a^2}L^0_k(-2a^2).
\end{equation}
Asymptotics for the Laguerre polynomials
may be found in \cite[Thm. 8.22.3]{Sze1959}, 
\begin{equation}\label{e2Laguerre}
 	\quad \forall x \in \C\backslash \R_+, \quad L^0_k(x) = \frac{e^{x/2}e^{2(-kx)^{1/2}}}{2\sqrt{\pi}(-x)^{1/4} k^{1/4}}(1+\BigO(k^{-1/2})),
\end{equation}
where the bound may be taken uniform on compact subsets of $\C\backslash \R_+$.  The conclusion \eqref{e2Precise} follows immediately from \eqref{e2HermiteToLaguerre} and \eqref{e2Laguerre}. 
\end{proof}

\section{Perturbations of Schr\"odinger operators with single-well potentials}\label{s3Monomial}

In Section \ref{sec.ho} we relied on readily available asymptotics of Hermite and Laguerre functions. In a more general setting, with perturbations more complicated than multiplication by $x$, we need to enter into the details of finding asymptotics for eigenfunctions of Schr\"odinger operators following Titchmarsh \cite{Tit1954, Titchmarsh-1958-book2}.

\subsection{Definition of operators and basic facts}

We now broaden our class of Schr\"odinger operators with potentials $q$ and introduce perturbations determined by a function $p$. 
\begin{asm} \label{asm:q}
	Let $q \geq 0$ be a real-valued, non-negative, even function satisfying
	\begin{enumerate}[{\upshape i)}]
		\item\label{it.asm.1.i} $q \in C^2(\R) \cap C^3(\R\setminus [-a,a])$ for some $a \geq 0$, 
		\item $q''(x) > 0$ for $x \in (0,\infty)$,
		\item\label{it.asm.1.iii} as $x \to + \infty$
	\begin{equation}\label{q.asym}
	\frac{q'(x)}{q(x)} = \BigO\Big(\frac1x \Big), 
	\quad 
	\frac{q''(x)}{q'(x)}= \BigO\Big(\frac1x \Big), 
	\quad
	\frac{q''' (x)}{q''(x)} = \BigO\Big(\frac1x \Big),
	\end{equation}
		\item\label{it.asm.1.iv} $q^{-1/2} \in L^1((a, \infty))$ for some $a \geq 0$, and
		\item there exists some $\delta > 0$ and $a \geq 0$ for which $q(x) \geq \delta x^2$ for all $x \geq a$.
	\end{enumerate}
\end{asm}
\medskip 
Note that we implicitly assume that the potential $q$ is of single-well type: since $q$ is even, $q'(0) = 0$, and the assumption that $q''(x) > 0$ for all $x > 0$ implies that $q'(x) > 0$ when $x > 0$ as well. We remark also that under reasonable additional hypotheses such as $q(0) = 0$, the first and second parts of \eqref{q.asym} follow from the third.

Define the Schr\"odinger operator
\begin{equation}\label{e3TDef}
\begin{aligned}
T &= -\frac{d^2}{dx^2} + q, \quad 
\Dom(T) = \{ f \in W^{2,2}(\R): q f \in L^2(\R) \},
\end{aligned}
\end{equation}
where $q$ will denote either a function on $\R$ or the corresponding multiplication operator.

Standard examples of $T$ to which our results apply are anharmonic oscillators with $q(x)=|x|^\beta$, $\beta >2$.
The operator $T$ is self-adjoint, bounded from below and has a compact resolvent. The self-adjointness of $T$, with the domain given in \eqref{e3TDef}, follows from the essential self-adjointness of $T$ equipped with the domain $C_0^{\infty}(\R)$, see for instance \cite[Thm.\ X.28]{Reed2}, and the graph norm inequality in Lemma \ref{lem.beta.grn} below. For compactness of resolvent, see for instance \cite[Thm.\ XII.67]{Reed4}.

The spectrum of $T$ is discrete and contains only simple eigenvalues, so we write
\begin{equation}\label{e3TEigs}
\opnm{Spec} (T) = \{ \lambda_0 < \lambda_1 < \lambda_2 < \dots\} \subset \R_+.
\end{equation}
The simplicity of eigenvalues can be shown using the Wronskian: if $f, g \in \Dom(T)$ are eigenfunctions of $T$ corresponding to the same eigenvalue, their Wronskian $f'g - g'f$ must be constant. By the Cauchy-Schwarz inequality we can see that the Wronskian must be integrable and is therefore zero, meaning that $f$ and $g$ are linearly dependent.  Note also that our assumption that $q(x)$ is even means that $Tf(x) = \lambda f(x)$ if and only if $Tf(-x) = \lambda f(-x)$, so by simplicity of eigenvalues $f(x)$ and $f(-x)$ are linearly dependent. This is only possible if $f$ is an odd or an even function.

Given $\lambda \in q(\Bbb{R})$, we define $x_\lambda$, the positive turning point
\begin{equation}\label{xlambda.def}
q(x_\lambda) = \lambda,\quad  x_\lambda > 0.
\end{equation}
From \cite[Eq.\ (1.2)]{Tit1954}, we know that eigenvalues of $T$ obey the estimate
\begin{equation}\label{e3TitEigvals}
\int_{-x_{\lambda_k}}^{x_{\lambda_k}} (\lambda_k - q(x))^{1/2}\,dx = \left( k+\frac{1}{2}\right) \pi + o(1).
\end{equation}
For the special choice of $q(x)=|x|^\beta$, it follows that
\begin{equation}\label{e3klambdak}
\lim_{k\rightarrow\infty} k^{-1}\lambda_k^{\frac{2+\beta}{2\beta}} = \frac{\pi}{2\Omega_\beta}, \quad \mbox{ with} \quad 
\Omega_\beta = \int_0^1 (1-x^\beta)^{1/2}\,dx.
\end{equation}

Lemma \ref{lem:titch} in the Appendix gives solutions $\{y_k\}_{k=0}^\infty$, on the interval $(0,\infty)$, to the equations
\begin{equation}
Ty_k = \lambda_k y_k
\end{equation}
of the form
\begin{equation}\label{yu.rel}
y_k(x) = u_k(x) (1+ \BigO(x_{\lambda_k}^{-1} \lambda_k^{-1/2})), \quad x >0,
\end{equation}
where the function $u_k$ is defined in \eqref{u.def} in the Appendix. By a Wronskian argument, these coincide with the eigenfunctions: Assumption \ref{asm:q} gives that there exists $C_0 > 0$ for which $q'(x) \leq C_0q(x)/x$ for all $x > C_0$, and by Gr\"onwall's inequality, for all $x > C_0$,
\begin{equation}
	 q(x) \leq q(C_0)\exp\left(\int_{C_0}^x \frac{C_0}{t}\,dt\right) \leq q(C_0)e^{C_0\ln x}.
\end{equation}
Therefore there exists some $C \geq 2$ where
\begin{equation}\label{q.poly.bounds}
	\frac{1}{C}x^2 \leq q(x) \leq Cx^C, \quad x > C,
\end{equation}
so $y_k$ is integrable on $(0, \infty)$ by Lemma \ref{lem:u.basic}. Writing
\begin{equation}
	y_k'(x) = -\int_x^\infty y_k''(t)\,dt = -\int_x^\infty (\lambda_k - q(t))y_k(t)\,dt,
\end{equation}
the same estimates in Lemma \ref{lem:u.basic} give that $y_k'$ is integrable as well.  The same argument which provides the simplicity of eigenvalues above then shows that, when $Tv = \lambda_k v$, then $v$ must be a multiple of $y_k$ on $(0, \infty)$.

We will therefore choose the eigenfunctions $\{y_k\}_{k=0}^\infty$ in the form \eqref{yu.rel}; for $x < 0$, we will simply rely on the fact that the eigenfunctions are either odd or even.

In order to compute asymptotics of multiples of the eigenfunctions $y_k$, we present asymptotics for the functions $u_k$ in the Appendix.  Roughly, the eigenfunctions are exponentially small for $|x| \geq x_{\lambda_k}$ and are rapidly oscillating with amplitude $(\lambda_k-q(x))^{-1/4}$ when $|x| \leq x_{\lambda_k}$. (In a small neighborhood of the turning points $x_{\lambda_k}$, we only give rough upper and lower bounds of the $L^2$ mass of the $y_k$.)  This parallels the Wentzel--Kramers--Brillouin (WKB) approximation where the $y_k$ would be approximated by
\[
	a(x) e^{i\zeta(x)}
\]
with $\zeta(x) = \zeta(x,\lambda_k) = \int_x^{x_\lambda} \sqrt{\lambda_k-q(s)}\,ds$ solving the eikonal equation
\[
	(\zeta'(x))^2 + q(x)-\lambda_k = 0
\]
and $a(x) = a(x,\lambda_k) = (\lambda_k - q(x))^{-1/4}$ solving the transport equation
\[
	\zeta''(x) a(x) - 2\zeta'(x)a'(x) = 0.
\]
Our situation is essentially different, however, in that we are not studying a semiclassical Schr\"odinger operator $h^2 D^2 + q(x)$ as the small parameter $h$ tends to zero.

\subsection{Definition of perturbed operators}

We look for perturbations of $T$ which give nice properties like those exploited in Section \ref{sec.ho}.  We wish to have
\begin{equation}\label{e3fkDef}
f_k(x) = e^{p(x)}y_k(x), \qquad g_k(x) = e^{-p(x)}y_k(x)
\end{equation}
as eigenfunctions of our perturbed operator and its adjoint. This leads us to the following formal conjugation of $T$ with a multiplication operator $e^{p(x)}$:
\begin{equation}\label{e3Conjugation}
e^{p(x)}Te^{-p(x)} = T - e^{p(x)}\left[\frac{d^2}{dx^2}, e^{-p(x)}\right] = T + p''(x) - (p'(x))^2 + 2p'(x)\frac{d}{dx}.
\end{equation}
In order to have good properties of the perturbed operator and asymptotics for its eigenfunctions, we impose the following conditions on $p$.
\begin{asm}\label{asm:p}
	Let $p \in C^2(\R)$ be an odd function which is non-negative and increasing on $(0, \infty)$.  Assume furthermore that $\lim_{x \to \infty} p(x) = \infty$ and that, as $x \to \infty$, $p$ obeys the estimates
	\begin{equation}\label{p.bound}
	p(x) = o(x q(x)^\frac12), \quad   
	\frac{p'(x)}{p(x)} = \BigO\Big(\frac1x \Big), \quad   
	|p''(x)| = o(q(x)^\frac12).
	\end{equation}
\end{asm}

Thus the new operator in \eqref{e3Conjugation} is a sum of $T$ and the first order differential operator
\begin{equation}\label{e3BDef}
\begin{aligned}
B = B(p) = 2p'(x) \frac{d}{dx} + p''(x) - (p'(x))^2, \qquad
\Dom(B)  = \Dom(T).
\end{aligned}
\end{equation}

We show that $B$ is a relatively bounded perturbation of $T$ with $T$-bound $0$ (see Lemma \ref{lem.beta.sub}), and we define the perturbed operator as
\begin{equation}\label{e3LDef}
\begin{aligned}
L = T+B, \qquad \Dom(L)  = \Dom(T).
\end{aligned}	
\end{equation}
Using the standard perturbation results \cite[Thm.\ IV.1.1, IV.1.16]{Kato-1966}, it follows that $L$ is closed with compact resolvent.
Consequently, the operator family
\begin{equation}\label{Ls.fam}
\begin{aligned}
L_t = T + B(tp) = T+tB(p) + t(1-t)(p'(x))^2,\qquad \Dom(L_t) = \Dom(T)
\end{aligned}
\end{equation}
is a holomorphic family of type (A); see \cite[Sec.\ VII.2]{Kato-1966} and in particular \cite[Thm.\ VII.2.6]{Kato-1966}. Notice also that $L=L_1$ and $L^*=L_{-1}$. 

We give and use analogues of Lemmas \ref{lem.ho.grn} and \ref{lem.ho.sub}. Notice that the inequality 	$\|Tf\|^2 + \|f\|^2 \leq 2 (\|f''\|^2 + \| q f\|^2 + \|f\|^2) $ for all $f \in \Dom(T)$ is straightforward from the Cauchy-Schwarz inequality.

\begin{lemma}\label{lem.beta.grn}
Let $q$ satisfy parts \ref{it.asm.1.i}-\ref{it.asm.1.iii} of Assumption \ref{asm:q}, and let $T$ be the operator defined in \eqref{e3TDef}. Then, for every $\delta>0$, there exists $k(\delta)>0$ such that for all $f \in \Dom(T)$
	\begin{equation}\label{beta.grn.eq}
	\begin{aligned}
	\|f''\|^2 + (1-\delta)  \| q f\|^2 
	- k(\delta) \|f\|^2 
	&\leq 
	\|Tf\|^2 + \|f\|^2.
	\end{aligned}
	\end{equation}
\end{lemma}
\begin{proof}
For $f \in \Dom(T)$,
\begin{align*}
\|Tf\|^2 &= \|f''\|^2 + \|q f\|^2 - 2\Re\langle f'', q f\rangle
\\ &= \|f''\|^2 + \|q f\|^2 +2 \Re \left(\|q^{1/2}f'\|^2 +\langle f', q'f\rangle\right).
\end{align*}
Then, again using integration by parts
\[
\langle f', q'f\rangle = -\|(q'')^{1/2}f\|^2 - \langle f, q' f'\rangle,
\]
we get that
\[
2\Re \langle f', q'f\rangle = -\|(q'')^{1/2}f\|^2,
\]
so
\begin{equation}\label{e3TfNorm}
\|Tf\|^2 = \|f''\|^2 + \|qf\|^2 + 2\|q^{1/2}f'\|^2 - \|(q'')^{1/2}f\|^2.
\end{equation}

Since $q$ satisfies \eqref{q.asym}, we obtain for a sufficiently large $N \geq 1$ 
\begin{equation}
\begin{aligned}
\|(q'')^{1/2}f\|^2 & \leq M_1 \|f\|^2 +  \int_{\R \setminus [-N,N]} \frac{q''(x)}{q'(x)} \frac{q'(x)}{q(x)} q(x) |f(x)|^2 \, dx 
\\
& \leq M_1 \|f\|^2 + M_2 \int_{\R \setminus [-1,1]} \frac{q(x)}{x^2} |f(x)|^2 \, dx
\\
& \leq M_1 \|f\|^2 + M_2 \int_{\R} q(x)|f(x)|\:|f(x)| \, dx, 
\end{aligned}
\end{equation}
where $M_1$, $M_2>0$. Thus the Cauchy-Schwarz inequality gives for any $\delta>0$
\begin{equation}\label{e3TfNorm2}
\begin{aligned}
\|(q'')^{1/2}f\|^2 \leq \delta \|qf\|^2 + \left(M_1  + \frac{M_2^2}{2 \delta} \right)  \|f\|^2.
\end{aligned}
\end{equation}
Hence we deduce the inequality \eqref{beta.grn.eq} by combining \eqref{e3TfNorm} and \eqref{e3TfNorm2}.
\end{proof}

\begin{lemma}\label{lem.beta.sub}
Let $q$ satisfy parts \ref{it.asm.1.i}-\ref{it.asm.1.iii} of Assumption \ref{asm:q} and let $p$ satisfy Assumption \ref{asm:p}, and let $T$ and $B$ be the operators defined in \eqref{e3TDef} and \eqref{e3BDef}. Then $B$ is relatively bounded with respect to $T$ with the $T$-bound $0$, i.e. for any $\varepsilon>0$ there exists $C(\varepsilon)>0$ such that
\begin{equation}\label{b.beta.sub.op}
\forall f \in \Dom(T), \quad \|B f\| 
\leq
\varepsilon \|T f\| + C(\varepsilon)\|f\|.
\end{equation}
\end{lemma}

\begin{proof}
It is sufficient to derive \eqref{b.beta.sub.op} for $f \in C_0^{\infty}(\R)$, since this set is a core for $T$; see for instance \cite[Thm.\ X.28]{Reed2}.

To estimate $\|Bf\|$, we begin with the triangle inequality:
\begin{equation}\label{e3BfBound1}
	\|Bf\| \leq 2\|p'f'\| + \|p'' f\| + \|(p')^2f\|.
\end{equation}
We wish to separate multiplication and differentiation in the mixed term $\|p'f'\|$, so we write
\begin{equation}\label{e3BfBound2}
	\begin{aligned}
		\|p'f'\| &= \left(-\langle 2p'p'' f, f'\rangle -\langle (p')^2 f, f''\rangle\right)^{1/2} 
		\\ & \leq \|2p'p''f\|^{1/2} \|f'\|^{1/2} + \|(p')^2 f\|^{1/2} \|f''\|^{1/2}.
	\end{aligned}
\end{equation}
From our Assumption \ref{asm:p} and the Cauchy-Schwarz inequality, it is then straightforward to bound the four quantities
\begin{equation}\label{prb.est}
\begin{aligned}
\|(p')^2 f\| & \leq \tilde \varepsilon \|qf\| + C(\tilde \varepsilon) \|f\|,
\\
\|p''f\| & \leq \tilde \varepsilon \|qf\| + C(\tilde \varepsilon) \|f\|,
\\
\|p'p''f\|^{1/2}\|f'\|^{1/2} & \leq (\tilde \varepsilon \|qf\| + C_1(\tilde \varepsilon) \|f\|)^{1/2} (\tilde \varepsilon \|f''\| + C_2(\tilde \varepsilon) \|f\|)^{1/2}
\\ & \leq \tilde \varepsilon \|f''\| + \tilde \varepsilon \|qf\| +  C(\tilde \varepsilon) \|f\|,
\\
\|(p')^2f\|^{1/2}\|f''\|^{1/2} & \leq (\tilde \varepsilon \|qf\| +  C_3(\tilde \varepsilon) \|f\|)^{1/2}\|f''\|^{1/2}
\\
& \leq \tilde \varepsilon \|f''\| + \tilde \varepsilon \|qf\| +  C(\tilde \varepsilon) \|f\|,
\end{aligned}
\end{equation}
with arbitrary $\tilde \varepsilon >0 $ and some $C(\tilde \varepsilon), C_i(\tilde \varepsilon)>0$, $i=1,2,3$. Hence the combination of \eqref{beta.grn.eq}, \eqref{prb.est} and the Cauchy-Schwarz inequality yields \eqref{b.beta.sub.op}.
\end{proof}

\begin{remark}\label{rem:s.sub}
In the special case of $q(x)=|x|^\beta$, $\beta \geq 2$, and 
\begin{equation}\label{e3pBound}
|p^{(k)}(x)| \leq C (1+x^2)^{\frac{\alpha-k}{2}}, \quad k=0,1,2, 
\quad \alpha < \beta/2+1,
\end{equation}
we obtain that $B$ is $s$-subordinated to $T+1$ for
\begin{equation}\label{e3sDef}
s=\frac{1}{2} + \max\left\{\frac{\alpha-1}{\beta},0\right\} < 1,
\end{equation}
that is to say, there exists a $C > 0$ such that
\begin{equation}\label{b.beta.sub}
\forall f \in \Dom(T), \quad \|B f\| 
\leq
C \|(T+1)f\|^s \|f\|^{1-s}.
\end{equation}
This can be seen by estimating the four quantities on the left-hand side of \eqref{prb.est} using a simple consequence of H\"older's inequality
\begin{equation*}
\|x^\delta f\| \leq \|x^\beta f\|^{\delta/\beta}\|f\|^{1-\delta/\beta}
\end{equation*}
for $0 \leq \delta \leq \beta$, its analogue applied on the Fourier transform side and the estimate of the graph norm \eqref{beta.grn.eq}.
\end{remark}

\subsection{Characterization of eigenfunctions}

\begin{lemma}\label{l3Eigenfunctions}
Let $q$ satisfy parts \ref{it.asm.1.i}-\ref{it.asm.1.iv} of Assumption \ref{asm:q} and let $p$ satisfy Assumption \ref{asm:p}, and let $L$ be the operator in \eqref{e3LDef}. Then the functions $f_k$ and $g_k$, defined in \eqref{e3fkDef}, belong to $\Dom(L) = \Dom(L^*) = \Dom(T)$ and satisfy the eigenfunction relations
\begin{equation}\label{e3fkEigs}
\begin{aligned}
Lf_k = \lambda_k f_k, \\
L^*g_k = \lambda_k g_k.
\end{aligned}
\end{equation}
Moreover, 
	\[
	\opnm{Spec}(L) = \opnm{Spec}(L^*) = \opnm{Spec}(T),
	\]
	All eigenvalues of $L$ and $L^*$ have algebraic multiplicity one; therefore $\{f_k\}_{k=0}^\infty$ forms the set of all eigenvectors of $L$ and $\{g_k\}_{k=0}^\infty$ forms the set of all eigenvectors of $L^*$.
	
\end{lemma}

\begin{proof}
	We follow Theorem \ref{tAOlver}, which is reproduced from \cite[Chap.\ 6, Thm.\ 2.1]{Olv1997}, in describing the solutions of the ordinary differential equation of second order
	\[
	T w(x)  = \mu w(x).
	\]
	In the notation of Theorem \ref{tAOlver}, we use $f(x) = q(x)$ and $g(x) = \mu$ to arrive at the error-control function
	\begin{align*}
	F(x) &= \int^x\left(q(t)^{-1/4}\frac{d^2}{dt^2}q(t)^{-1/4} - \mu q(t)^{-1/2}\right)\,dt 
	\\ &= \int^x\left(
	\frac{5}{16} \frac{q'(t)^2}{q(t)^{5/2}}
	- \frac{1}{4} \frac{q''(t)}{q(t)^{3/2}} 
	- \frac{\mu}{q(t)^{1/2}} \right)\,dt.
	\end{align*}
	We then have two linearly independent solutions given as
	\begin{equation}\label{e3wpmDef}
	\forall x \geq R, \quad w_{\pm}(x) = q(x)^{-1/4} \exp\left(\pm\int q(x)^{1/2}\,dx\right)\left(1+r_{\pm}(x)\right).
	\end{equation}
	The remainders are bounded by a quantity involving the variation of $F$ on $[R,\infty)$:
	\begin{equation}\label{r.est}
	\begin{aligned}
		|r_\pm(x)| 
	& \leq 
	\exp\left(\frac{1}{2}\mathcal{V}_{R,x}(F)\right)-1 
	 =
	\exp\left(\frac{1}{2}\int_R^{\infty} |F'(t)|\,dt\right)-1,
	\\
	|r_\pm'(x)| 
	& \leq 2 q(x)^{1/2} \left(
	\exp\left(\frac{1}{2}\int_R^{\infty} |F'(t)|\,dt\right)-1 \right).
	\end{aligned}
		\end{equation}
	From Assumption \ref{asm:q}, $F'$ is easily seen to be integrable on 
	$[R, \infty)$ when $R > 0$ is sufficiently large. If we select $R$ so large that
	\begin{equation*}
	\int_R^\infty |F'(t)| d t \leq \frac{1}{2},
	\end{equation*}
	then 
	\begin{equation}\label{rr'.est}
	|r_\pm(x)| \leq \frac{1}{2}, \qquad |r_\pm'(x)| \leq q(x)^{1/2}.
	\end{equation}
	
	A similar analysis shows that, on $(-\infty, -R]$, one has two $C^2$ solutions $W_{\pm}$ satisfying 
	\[
	T W_\pm(x) = \mu W_\pm(x), \quad x \leq -R,
	\]
	with a similar expression and error bounds on $(-\infty, -R]$ as $R \rightarrow \infty$.
	
	We now turn to a solution of the differential equation
	\begin{equation}\label{L.ode}
	Lf(x) = \mu f(x).
	\end{equation}
	It is evident that $y = e^{-p}f$ solves an ordinary differential equation
	\begin{equation}\label{T.ode}
	Ty(x) = \mu y(x).
	\end{equation}
	Similarly, if $y$ solves \eqref{T.ode}, then $f=e^p y$ solves \eqref{L.ode}.
	
	For $R > 0$, any solution $y$ of \eqref{T.ode} satisfies
	\begin{equation}\label{e3apmwpm}
	y(x) = b_+ w_+(x) + b_- w_-(x), \quad x \geq R,
	\end{equation}
	for $b_\pm \in \C$ and $w_\pm$ from \eqref{e3wpmDef} and similarly in $(-\infty,-R]$.
	Thanks to \eqref{e3wpmDef} we see that
	\begin{equation}\label{epw.est.1}
	\begin{aligned}
	e^{p(x)}w_{\pm}(x) & =  q(x)^{-1/4} \exp\left(\pm \int_R^{x} q(t)^{1/2} 
	\left(
	1+ \frac{p'(t)}{q(t)^{1/2}}
	\right)
	\,dt 
	\mp q(R)^{1/2}\right)
	\\
	& \quad \times \left(1+r_{\pm}(x)\right)
	\end{aligned}
	\end{equation}
    and $p'(t)q(t)^{-1/2} = (p'(t)/p(t))(p(t)/q(t)^{-1/2}) = o(1)$ as $t \to \infty$ by Assumption \ref{asm:p} and \eqref{p.bound}.
	Moreover using  \eqref{q.asym}, we obtain (with some $c>0$)
	\begin{equation}\label{epw.est.2}
	\int_R^{x} q(t)^{1/2} \, dt 
	\geq 
	\frac 1{q'(x)} \int_R^{x} q'(t)q(t)^{1/2} \, dt
	\geq 
	 c \, x q(x)^{1/2},
	\end{equation}
	thus  $e^p w_+ \notin L^2((R,\infty))$ and $e^p w_- \in L^2((R,\infty))$, see Assumption \ref{asm:q}. Analogous claims hold also for $e^p W_\pm$ in $(-\infty,-R]$. 
	Hence we have from \eqref{e3wpmDef}--\eqref{epw.est.2} that if $y=e^{-p}f$ and $y$ solves \eqref{T.ode}, then $y \in L^2(\R)$ if and only if $f \in L^2(\R)$. Moreover by similar reasoning, $qy \in L^2(\R)$ if and only if $qf \in L^2(\R)$ and, using estimates for $|r_\pm'|$ in \eqref{r.est}--\eqref{rr'.est}, $y' \in L^2(\R)$ if and only if $f'\in L^2(\R)$. In summary, $y$ solves \eqref{T.ode} and $y \in \Dom(T)$ if and only if $f=e^{-p} y$ solves \eqref{L.ode} and $f \in \Dom(L) = \Dom(T)$. Thus $\opnm{Spec}(T) = \opnm{Spec}(L)$.
	
    Finally, we consider the family $L_t$, $t \in [-1,1]$, as in \eqref{Ls.fam}.
	The reasoning above gives that all $L_t$ have identical spectrum, in particular $\opnm{Spec}(L^*) = \opnm{Spec}(L_{-1}) = \opnm{Spec}(T)$. 
	In order to obtain agreement of the algebraic multiplicities, defined as the dimensions of the spaces of root vectors, fix an eigenvalue $\lambda \in \opnm{Spec} (T)$ and an $\eps > 0$ smaller than the distance from $\lambda$ to any other eigenvalue. Using \cite[Thm.\ VII.1.3]{Kato-1966}, we have that the trace of the spectral projection,
	\[
	\opnm{Tr}\left(\frac{1}{2\pi i}\int_{|\zeta - \lambda| = \eps} (\zeta - L_t)^{-1}\,d\zeta\right),
	\]
	is a continuous function of $t$.  This trace gives the dimension of the space of root vectors associated with the eigenvalue $\lambda$, and therefore the algebraic multiplicity of each eigenvalue matches that of $T = L_0$, which is one.
\end{proof}

\begin{remark}\label{rem.L1}
The growth condition in Assumption \ref{asm:q}, \ref{it.asm.1.iv} is important for analyzing a more general class of potentials. It allows to use simple asymptotic approximations for solutions of \eqref{T.ode}, see Theorem \ref{tAOlver} in Appendix. In the special case $q(x) = x^2$, we may obtain similar asymptotics due to the very sharp asymptotics readily available for the Hermite functions.  In particular, the characterization \eqref{e2hkDef} allows us to establish the claim of Lemma \ref{l3Eigenfunctions} for $q(x) = x^2$ when $p$ satisfies Assumption \ref{asm:p}. Moreover, observe that without part \ref{it.asm.1.iv} of Assumption \ref{asm:q}, Lemma \ref{lem:titch} and Proposition \ref{prop:u.exp} justify that if $y$ is an eigenfunction of $T$, then $f=e^py$ satisfies \eqref{L.ode}, $f \in L^2(\R)$ and $qf \in L^2(\R)$; however, to conclude that $f \in \Dom(L)$, a suitable estimate on $y'$ is needed.
\end{remark}

\begin{lemma}\label{l3Complete}
Let $q$ satisfy Assumption \ref{asm:q}, let $p$ satisfy Assumption \ref{asm:p}, and let $L$ be the perturbed operator in \eqref{e3LDef}. Then the sequences of functions $\{f_k\}_{k = 0}^\infty$ and $\{g_k\}_{k=0}^\infty$ defined in \eqref{e3fkDef} are each complete in $L^2(\R)$.
\end{lemma}

\begin{proof}
	We need only consider the claim about $L$ and the $\{f_k\}_k$. We have already showed that all eigenvalues of $L$ are simple and $\{f_k\}_k$ are all eigenvectors. To show that the eigensystem of $L$ is complete, we use \cite[Thm.\ X.3.1]{GohGolKaa1990}. Thus we have to verify that, for some $z_0 \in \R$, the resolvent $(L-z_0)^{-1}$ belongs to the Schatten class $\mathfrak{S}_r$ with $r \geq 1$ and that $\Num(L - z_0)$ lies in a closed sector with vertex at zero and opening $\frac \pi r$.
	
	To this end, we take $z_0 <0$ and $|z_0|$ sufficiently large and estimate $s$-numbers of $(L-z_0)^{-1}$. By \cite[Prop.\ VI.1.3]{GohGolKaa1990} with the representation of the resolvent
	$$ 
	(L-z_0)^{-1} = (T-z_0)^{-1} (I + B(T-z_0)^{-1} )^{-1},
	$$
	one arrives at the estimate (with some $C>0$)
	\begin{equation*}
	s_j((L- z_0)^{-1}) \leq C s_j((T-z_0)^{-1}).
	\end{equation*}
	Since $T$ is positive definite, the $s$-numbers of $(T-z_0)^{-1}$ coincide with the eigenvalues of $(T-z_0)^{-1}$. Since $q(x) \geq \delta x^2$ for $x$ sufficiently large, we get from the min-max principle, see e.g.\ \cite[Thm.\ XIII.1 and Problem\ 1]{Reed4}, that $\lambda_j \geq \delta^{-1/2} (2j+1) + C$, $j \in \N_0$, hence 
	$(L-z_0)^{-1} \in \mathfrak{S}_r$ for any $r>1$.
	
	Secondly, we prove that $\Num(L-z_0)$ is contained in a sector whose opening can be made arbitrarily small while keeping the vertex at zero (by taking $|z_0|$ large). 
	We take $f \in \Dom(L)$ and simplify the expression for the quadratic form of $B$ by integration by parts,
	\begin{equation}
	\langle B f, f \rangle 
	= 
	\langle p'' f - p'^2 f + 2 p' f', f \rangle 
	=
	- \|p' f\|^2 + 2 i \Im \langle f', p' f \rangle.
	\end{equation}
	As in \eqref{prb.est}, using Assumption \ref{asm:p} and the Cauchy-Schwarz inequality, we estimate $\Im \langle B f, f \rangle$ and  $\Re  \langle B f, f \rangle$, 
	\begin{equation}\label{Bqf.est}
	\begin{aligned}
	|\Im \langle Bf, f \rangle| 
	&\leq 
	2 |\langle f', p'f \rangle| 
	\leq
	\|f'\| \|p' f \|  \leq \|f'\| (\varepsilon \|q^{1/2} f\| + C_1(\varepsilon) \|f\|) 
	\\  
	&\leq  \varepsilon \left(
	\|f'\|^2 + \|q^{1/2} f\|^2) + C_2(\varepsilon) \|f\|^2
	\right),
	\\
	|\Re \langle Bf, f \rangle| & = \|p'f\|^2 
	\leq \varepsilon \|q^{1/2} f\|^2 + C_3(\varepsilon) \|f\|^2,
	\end{aligned}
	\end{equation}
	where $\varepsilon>0$ is arbitrary and $C_i(\varepsilon)>0$, $i=1,2,3$. Hence using integration by parts again and \eqref{Bqf.est}, we obtain
	\begin{equation}\label{ReL.est}
	\begin{aligned}
	\Re \langle (L -z_0) f, f \rangle & = \|f'\|^2 + \|q^{1/2} f\|^2 - \|p' f\|^2 + |z_0|\|f\|^2
	\\ 
	& 	\geq (1-\varepsilon) \left(
	\|f'\|^2 + \|q^{1/2} f\|^2 + (|z_0|- C_3(\varepsilon))  \|f\|^2 
	\right).
\end{aligned}
	\end{equation}
For any $\varepsilon>0$, we can take $|z_0|$ so large that $C_2(\varepsilon) \leq |z_0| - C_3(\varepsilon)$, hence it follows from \eqref{Bqf.est} and \eqref{ReL.est} that 
\begin{align*}
&	|\Im \langle (L-z_0) f, f \rangle| \leq \frac \varepsilon{1-\varepsilon} \Re \langle (L -z_0)f, f \rangle. \qedhere
\end{align*}
\end{proof}

\begin{remark}\label{rem:ho.lem}
As in Remark \ref{rem.L1}, the additional conditions on $q$ in Lemma \ref{l3Complete}, in particular that $q(x) \geq \delta x^2$, for $x$ sufficiently large, only simplify the proof. This condition was used to estimate eigenvalues of $T$ from below using the min-max principle and get that $(L-z_0)^{-1} \in \mathfrak{S}_r$ with any $r>1$. For other potentials $q$, one could use instead the asymptotic formula \eqref{e3TitEigvals}. Notice in particular that the claims of Lemmas \ref{l3Eigenfunctions} and \ref{l3Complete} are valid for $q(x)=x^2$ and $p$ satisfying Assumption \ref{asm:p}.
\end{remark}

\subsection{Computation of spectral projection norms}
Under the assumptions of Lemma \ref{l3Complete} the sequence $\{f_k\}_{k=0}^\infty$ is complete. Since $\langle f_k, g_j\rangle = \delta_{jk}\|y_k\|^2$, the spectral projection $P_k$ of $L$ associated with $\lambda_k$ has the representation
\begin{equation}\label{Pk.repres}
P_ku = \frac{1}{ \|y_k\|^2}  \langle u , g_k \rangle f_k.
\end{equation}
It is easy to see that its norm satisfies
\begin{equation}\label{Pk.fk.gk}
\|P_k\| = \frac{\|f_k\| \|g_k\|}{\|y_k\|^2}.
\end{equation}

\begin{theorem}\label{t3}
	Let $q$ satisfy Assumption \ref{asm:q}, let $x_\lambda$ be defined by \eqref{xlambda.def}, let $p$ satisfy Assumption \ref{asm:p}, and let $L$ be defined as in \eqref{e3LDef}. Finally, let $P_k$ denote the spectral projection of $L$ at the eigenvalue $\lambda_k$.
	Then
	\begin{equation}\label{e3Conclusion}
	\lim_{k\rightarrow\infty} \frac{\log\|P_k\|}{p(x_{\lambda_k})} = 2. 
	\end{equation}	
\end{theorem}
\begin{proof}
	Within the proof, $C$ denotes a positive, sufficiently large constant that can vary in every step. We allow $C$ to depend on $p$ and $q$ but not on $\eps$; if the constant depends on $\eps$, then we write $C_\eps$.

	Let $\{y_k\}_{k=0}^\infty$ be as in \eqref{yu.rel}. Since $y_k(x)^2$ is always an even function and appealing to \eqref{u.norm},
	\begin{equation}\label{yk.norm}
		\begin{aligned}
		\|y_k\|^2 &= 2\left(1+\BigO(x_{\lambda_k}^{-1}\lambda_k^{-1/2})\right)\int_0^\infty u_k(x)^2\,dx 
		\\ &= 2\left(1+\BigO(x_{\lambda_k}^{-1/3}\lambda_k^{-1/6})\right)\int_0^{x_{\lambda_k}} \frac{dx}{\sqrt{\lambda_k-q(x)}}.
		\end{aligned}
	\end{equation}
	Using also \eqref{e3fkDef} and the fact that $p(x)$ is odd, a change of variables shows that
	\begin{equation}\label{fk.gk.norm}
		\|f_k\|\:\|g_k\| = \int_{-\infty}^\infty e^{2p(x)}y_k(x)^2\,dx = 2 \left(1+\BigO(x_{\lambda_k}^{-1}\lambda_k^{-1/2}\right) \int_0^\infty \cosh(2p(x))u_k(x)^2\,dx.
	\end{equation}
	Using \eqref{Pk.fk.gk}, the first equality in \eqref{yk.norm}, and \eqref{fk.gk.norm}, we obtain
	\begin{equation}\label{Pk.1}
	\log \|P_k\| = \log \int_{0}^\infty \cosh(2p(x)) u_k(x)^2\,dx - \log \int_{0}^\infty u_k(x) ^2\,dx + o(1),
	\end{equation}
	as $k \to \infty$.

	Fix $\varepsilon \in (0,1)$. Denote by $b_k^\pm = q^{-1}((1+\eps)\lambda_k)$ the numbers $b^\pm$ from Lemma \ref{lem:u.basic}.\ref{b.lam} corresponding to $\lambda = \lambda_k$; note that $b_k^{\pm} \to \infty$ as $k \to \infty$.
	Then, by Proposition \ref{prop:u.exp}, \eqref{p.bound}, and the fact that $q'(x)/q(x) = \BigO (1/x)$ as $x \to +
	\infty$, for $k$ sufficiently large depending on $\varepsilon$ we have
	\begin{equation}\label{int.est.inf}
	\begin{aligned}
	\int_{b_k^+}^\infty \cosh(2p(x)) u_k(x)^2\,dx 
	&\leq
	C_\eps \int_{b_k^+}^\infty e^{-\frac{1}{C_\eps}\frac{q(x)^{3/2}}{q'(x)} + 2p(x)-\frac{1}{4}\log q(x)}\,dx
	\\ 
	&\leq 
	C_\eps \int_{b_k^+}^\infty e^{-\frac{1}{C_\eps}xq(x)^{1/2}} \,dx 
	\\
	& 
	\leq 
	C_\eps \int_{b_k^+}^\infty e^{-\frac{1}{C_\eps}\sqrt{\lambda_k} x} \,dx 
	\\
	&
	\leq
	C_\eps e^{-\frac{1}{C_\eps}\sqrt{\lambda_k} x_{\lambda_k}}.
	\end{aligned}
	\end{equation}
	
	We observe that, since $p$ is increasing for $x > 0$,
	\begin{equation}\label{int.cosh.small.x}
	\int_0^{b_k^+} \cosh(2p(x))u_k(x)^2\,dx \leq \cosh(2p(b_k^+))\int_0^\infty u_k(x)^2\,dx.
	\end{equation}
	In order to show that that, as $k \to \infty$,
	\begin{equation}\label{bkp.upper.bound}
		 \int_0^\infty \cosh(2p(x))u_k(x)^2\,dx \leq \left(\frac{1}{2}+o(1)\right)e^{2p(b_k^+)}\int_0^\infty u_k(x)^2\,dx,
	\end{equation}
	it suffices to show that the contribution from $[b_k^+, \infty)$ in \eqref{int.est.inf} is neglible. Because $q'(x)$ is increasing,
	\begin{equation}\label{eq.qprime.max}
		\int_{b_k^-}^{x_{\lambda_k}} (\lambda_k-q(x))^{-1/2}\, dx = \int_{b_k^-}^{x_{\lambda_k}} \frac{q'(x)}{q'(x)\sqrt{\lambda_k-q(x)}}\, dx \geq \frac{1}{q'(x_{\lambda_k})}(2\sqrt{\eps\lambda_k}).
	\end{equation}
	Using that $q(x)/q'(x) \geq x/C$ and \eqref{q.poly.bounds}, we see that
	\begin{equation}\label{eq.qprime.max.2}
		\frac{2\sqrt{\eps\lambda_k}}{q'(x_{\lambda_k})} = \frac{2\sqrt{\eps}q(x_{\lambda_k})}{q'(x_{\lambda_k})\sqrt{q(x_{\lambda_k})}} \geq \frac{1}{C_\eps}x_{\lambda_k}^{-C}.
	\end{equation}
	Since the contribution from $[b_k^+, \infty)$ in \eqref{int.est.inf} is exponentially small in $x_{\lambda_k}$ and since $e^{2p(b_k^+)} \geq 1$, we obtain \eqref{bkp.upper.bound}.

	Inserting this into \eqref{Pk.1} gives, for $k$ sufficiently large depending on $\eps$,
	\begin{equation}
		\log\|P_k\| \leq 2p(b_k^+) + C_\eps.
	\end{equation}
	Knowing that $p(b_k^+) \to \infty$ as $k\to \infty$, we conclude that
	\begin{equation}\label{Pk.limsup}
	\limsup_{k \to \infty} 
	\frac{\log \|P_k\|}{ p(x_{\lambda_k})} 
	\leq 
	2 \limsup_{k \to \infty} 
	\frac{p(b_k^+)}{p(x_{\lambda_k})}. 
	\end{equation}

	Our goal now is to show that, for $\eps > 0$ sufficiently small,
	\begin{equation}\label{pbkp.pxlam}
		\limsup_{k \to \infty} \frac{p(b_k^+)}{p(x_{\lambda_k})} \leq 1 + C\eps.
	\end{equation}
	The derivative of the inverse of $q$ gives
	\[
		\frac{d}{d\lambda}q^{-1}(\lambda) = \frac{1}{q'(q^{-1}(\lambda))}
	\]
	which is decreasing for $\lambda > 0$.  Therefore
	\begin{equation}\label{bp.1}
	b_k^+ - x_{\lambda_k} 
	= 
	q^{-1}((1+\varepsilon)\lambda_k) - q^{-1}(\lambda_k) 
	\leq 
	\frac{\varepsilon \lambda_k}{q'(q^{-1}(\lambda_k))}
	=
	\frac{\varepsilon q(x_{\lambda_k})}{q'(x_{\lambda_k})}.
	\end{equation}
	Since $q'$ is non-decreasing for $x > 0$, we see that $q(x) \leq q(0) + xq'(x)$ for all $x > 0$.  Therefore, for all $k>k_0$ with $k_0$ independent of $\varepsilon$, 
	\begin{equation}\label{bp.2}
	\frac{b_k^+ - x_{\lambda_k}}{x_{\lambda_k}} 
	\leq 
	\frac{\varepsilon q(x_{\lambda_k})}{x_{\lambda_k} q'(x_{\lambda_k})} 
	\leq 
	\varepsilon
	\left(
	1 + \frac{q(0)}{x_{\lambda_k} q'(x_{\lambda_k})}
	\right) 
	\leq 2 \varepsilon.
	\end{equation}
	Using that $p$ is increasing, $p'(x)/p(x) = \BigO(1/x)$ as $x \to \infty$ and the mean value theorem, we obtain for some $\xi \in (x_{\lambda_k},b_k^+)$ that
	\begin{equation}\label{ratio.without.limsups}
	\begin{aligned}
	\frac{p(b_k^+)}{p(x_{\lambda_k})} & = 1 + \frac{p(b_k^+) - p(x_{\lambda_k})}{p(x_{\lambda_k})}
	= 
	1 + \frac{p'(\xi) (b_k^+  - x_{\lambda_k}) }{p(x_{\lambda_k})}
	\\
	&\leq
	1 + C \frac{p(b_k^+)}{p(x_{\lambda_k})} \frac{b_k^+ - x_{\lambda_k}}{x_{\lambda_k}}
	\leq
	1 + 2 C \varepsilon \frac{p(b_k^+)}{p(x_{\lambda_k})}.
	\end{aligned}
	\end{equation}
	Rearranging terms gives \eqref{pbkp.pxlam}, and taking $\eps \to 0$ in \eqref{Pk.limsup} gives
	\begin{equation}\label{Pk.limsup.2}
	\limsup_{k \to \infty} 
	\frac{\log \|P_k\|}{ p(x_{\lambda_k})} 
	\leq 
	2. 
	\end{equation}
	
	In the second step, starting from \eqref{Pk.1}, using \eqref{u.norm} and Corollary \ref{cor:u.bpm}, we obtain
	\begin{equation}\label{Pk.geq.1}
	\begin{aligned}
	\log \|P_k\| & \geq  \log \int_{b_k^-}^{b_k^+} \cosh(2p(x)) u_k(x)^2\,dx - \log \int_{0}^\infty u_k(x) ^2\,dx - C
	\\ 
	& \geq 
	\log \cosh(2p(b_k^-)) - \log \frac{\int_{0}^\infty u_k(x) ^2\,dx}{\int_{b_k^-}^{b_k^+} u_k(x)^2\,dx} - C
	\\ 
	& \geq 
	2p(b_k^-) - \log \frac{\int_{0}^{x_{\lambda_k}} (\lambda_k - q(x))^{-1/2}\, dx}{\int_{b_k^-}^{x_{\lambda_k}} (\lambda_k-q(x))^{-1/2}\, dx} - C
	\end{aligned}
	\end{equation}
	as $k\to \infty$. We aim to estimate the fraction in the second term,
	\begin{equation}\label{Pk.geq.int.fraction}
		\frac{\int_{0}^{x_{\lambda_k}} (\lambda_k - q(x))^{-1/2}\, dx}{\int_{b_k^-}^{x_{\lambda_k}} (\lambda_k-q(x))^{-1/2}\, dx} = 1 + \frac{\int_{0}^{b_k^-} (\lambda_k - q(x))^{-1/2}\, dx}{\int_{b_k^-}^{x_{\lambda_k}} (\lambda_k-q(x))^{-1/2}\, dx}.
	\end{equation}
	Because $q(x)$ is increasing,
	\begin{equation}
		\int_{0}^{b_k^-} (\lambda_k - q(x))^{-1/2}\, dx \leq b_k^- (\lambda_k-q(b_k^-))^{-1/2} = \frac{b_k^-}{\sqrt{\eps\lambda_k}},
	\end{equation}
	and a lower bound on the denominator comes from \eqref{eq.qprime.max}. Using that $\lambda_k = q(x_{\lambda_k})$, that $q'(x)/q(x) \leq C/x$, and that $b_k^- \leq x_{\lambda_k}$, we continue from \eqref{Pk.geq.int.fraction} to obtain
	\begin{equation}
		\frac{\int_{0}^{x_{\lambda_k}} (\lambda_k - q(x))^{-1/2}\, dx}{\int_{b_k^-}^{x_{\lambda_k}} (\lambda_k-q(x))^{-1/2}\, dx} \leq 1+\frac{b_k^- q'(x_{\lambda_k})}{2\eps \lambda_k} \leq 1 + C\frac{b_k^-}{\eps x_{\lambda_k}} \leq 1 + \frac{C}{\eps}.
	\end{equation}
	Along with \eqref{Pk.geq.1}, this gives as $k\to\infty$
	\begin{equation}
		\liminf_{k\to\infty}\frac{\log \|P_k\|}{p(x_{\lambda_k})} \geq \liminf_{k\to\infty}\frac{2p(b_k^-)-C_\eps}{p(b_k^-)} \geq 2\liminf_{k\to\infty}\frac{p(b_k^-)}{p(x_{\lambda_k})}.
	\end{equation}
	
	To complete the proof, we note that the relation between $b_k^+$ and $x_{\lambda_k}$ is similar to the relation between $x_{\lambda_k}$ and $b_k^-$: that is, as $\eps \to 0$,
	\begin{equation}
		x_{\lambda_k} = q^{-1}((1+\delta)q(b_k^-)), \quad \delta = \frac{\eps}{1-\eps} = \BigO(\eps), \quad \textnormal{as }\eps \to 0.
	\end{equation}
	The reciprocal of \eqref{pbkp.pxlam} therefore gives, as $\eps \to 0$,
	\begin{equation}
		\liminf_{k\to\infty} \frac{p(b_k^-)}{p(x_{\lambda_k})} \geq 1-C\eps,
	\end{equation}
	allowing us to conclude that
	\begin{equation}\label{Pk.liminf.2}
	\liminf_{k \to \infty} 
	\frac{\log \|P_k\|}{ p(x_{\lambda_k})} 
	\geq 
	2. 
	\end{equation}
	Taken with \eqref{Pk.limsup.2}, this completes the proof of the theorem.
\end{proof}

Next, we formulate a corollary for $q(x) = |x|^\beta$ and special choices of $p$ to get a range of exponential growth rates for $\|P_k\|$. In this case, if $x_{\lambda_k}$ is as in \eqref{xlambda.def} for $\lambda_k$ the eigenvalues of $T = -\frac{d^2}{dx^2}+q(x)$ and with $\Omega_\beta$ as in \eqref{e3klambdak}, we compute that
\begin{equation}\label{xlambdak.qbeta.asym}
	x_{\lambda_k} = (1+o(1))\left(\frac{\pi}{2\Omega_\beta}k\right)^{\frac{2}{2+\beta}}, \quad \textnormal{as }k\to\infty.
\end{equation}
Where $p(x)$ is only specified for $x$ sufficiently large, $p$ may be extended to an odd smooth increasing function satisfying Assumption \ref{asm:p} via a smooth partition of unity.

\begin{corollary}\label{cor:Pk.beta}
Let $q(x)= |x|^\beta$, $\beta \geq 2$, let $p$ be extended to satisfy Assumption \ref{asm:p}, let $L$ be defined as \eqref{e3LDef}, let $\{P_k\}_k$ be its spectral projections and let $\Omega_\beta$ be as in \eqref{e3klambdak}.
\begin{enumerate}[{\upshape i)}]
\item For any $0 < \sigma < 1$, let $\alpha = \frac{2+\beta}{2}\sigma$ and let
\begin{equation}\label{alpha.beta.cond}
p(x)=\frac 12 x^\alpha, \quad \forall x > 1.
\end{equation}
Then 	
\begin{equation}
\lim_{k \to \infty} \frac{\log \|P_k\|}{k^\sigma} 
= \left(
\frac \pi{2\Omega_\beta}
\right)^\sigma.
\end{equation}
\item For $\gamma > 0$, let
\begin{equation}
p(x) = \frac{x^{\frac{2+\beta}{2}}}{2(\log x)^\gamma}, \quad \forall x > 2.
\end{equation}		
Then
\begin{equation}
\lim_{k \rightarrow \infty} \frac{\log \|P_k\|}{k/(\log k)^\gamma} = \frac{\pi}{2\Omega_\beta}\left(\frac{\beta+2}{2}\right)^\gamma.
\end{equation}
\item\label{it.cor.log} Let
\begin{equation}
p(x) = \frac{1}{2} \log x \quad \forall x > 2.
\end{equation}		
Then
\begin{equation}
\lim_{k \rightarrow \infty} \frac{\log \|P_k\|}{\log k} = \frac{2}{2+\beta}.
\end{equation}
\item\label{it.cor.loglog} Let
\begin{equation}
p(x) = \frac{1}{2} \log \log x \quad \forall x > e^2.
\end{equation}		
Then
\begin{equation}
\lim_{k \rightarrow \infty} \frac{\log \|P_k\|}{\log \log k} = 1.
\end{equation}
\end{enumerate}
\end{corollary}

\begin{proof} As an example, we perform the first computation where $p(x) = \frac{1}{2}x^\alpha$. By \eqref{xlambdak.qbeta.asym}, for $k$ sufficiently large we have
\begin{equation}
p(x_{\lambda_k}) = \frac{1}{2}x_{\lambda_k}^\alpha = \frac{1}{2}\left(\frac{\pi}{2\Omega_\beta}k\right)^{\frac{2}{2+\beta}\alpha}(1+o(1)).
\end{equation}
Since $\frac{2}{2+\beta}\alpha = \sigma$,
\begin{equation}
\frac{\log \|P_k\|}{k^\sigma} = \frac{1}{2}(1+o(1))\left(\frac{\pi}{2\Omega_\beta}\right)^\sigma \frac{\log\|P_k\|}{p(x_{\lambda_k})} \to \left(\frac{\pi}{2\Omega_\beta}\right)^\sigma, \quad \textnormal{as }k \to \infty,
\end{equation}
by Theorem \ref{t3}.

The other computations are similar, and more precise asymptotics for \ref{it.cor.log} and \ref{it.cor.loglog} can be found in Corollary \ref{cor:Pk.beta.slow}.
\end{proof}

\begin{remark}
Notice that we have included $\beta=2$ in Corollary \ref{cor:Pk.beta}. This is because Theorem \ref{t3} can indeed be applied, since the claims of both Lemmas \ref{l3Eigenfunctions} and \ref{l3Complete} can be deduced in this case from asymptotics for the Hermite functions, as discussed in Remark \ref{rem:ho.lem}.
\end{remark}

For rapidly-growing $p$, a large portion of the mass of $e^{p(x)}y_k(x)$ is found near the turning point $x_{\lambda_k}$. Since we do not attempt a detailed analysis of $y_k$ near the turning point $x_\lambda$, we only obtain information on $\log\|P_k\|$ which is approximately $2p(x_{\lambda_k})$.  On the other hand, if $p$ grows slowly, the majority of the mass of $e^{p(x)}y_k(x)$ is spread out over the interval $[-x_{\lambda_k}, x_{\lambda_k}]$, which allows us to show the following more precise asymptotics for $\|P_k\|$. As a corollary for $q(x)=|x|^\beta$, we find sharp polynomial rates of spectral projection growth. 

\begin{theorem}\label{t3.2}
	Let $q$ satisfy Assumption \ref{asm:q}, let $x_\lambda$ be defined by \eqref{xlambda.def}, let $p$ satisfy Assumption \ref{asm:p} and in addition, as $x \to + \infty$,
	\begin{equation}\label{p'.slow}
	 p'(x) = \BigO \Big(\frac 1x \Big),
	\end{equation}
	let $L$ be defined as in \eqref{e3LDef}, and denote by $P_k$ the spectral projection for $L$ at the eigenvalue $\lambda_k$.
	Then
	\begin{equation}\label{e3Conclusion.2}
	\lim_{k\rightarrow\infty} \|P_k\| \frac{\int_0^{x_{\lambda_k}} (\lambda_k - q(x))^{-1/2}\,dx}{\int_0^{x_{\lambda_k}} e^{2p(x)}(\lambda_k - q(x))^{-1/2} \,dx} = \frac12. 
	\end{equation}	
\end{theorem}
\begin{proof}
Following the beginning of the proof of Theorem \ref{t3}, from \eqref{Pk.fk.gk}, \eqref{yk.norm}, and \eqref{fk.gk.norm}, we have
\begin{equation}\label{eq.t3.2.copy}
	\|P_k\| = \frac{\int_0^\infty \cosh(2p(x))u_k(x)^2\,dx}{\int_0^{x_{\lambda_k}}\pi(\lambda_k-q(x))^{-1/2}\,dx}\left(1+o(1)\right), \quad k\to \infty.
\end{equation}
Fix $\eps \in (0,1/3)$ and let $b_k^\pm$ be as in Lemma \ref{lem:u.basic}.\ref{b.lam} corresponding to $\lambda = \lambda_k$; note again that $b_k^{\pm} \to \infty$ as $k \to \infty$. We again use $C$ for a sufficiently large constant which may vary from line to line, and $C_\eps$ when this constant may depend on $\eps$. We will show that
\begin{equation}\label{eq.t3.2.cosh.goal}
	\int_0^\infty \cosh(2p(x))u_k(x)^2\,dx = (1+o(1) + \BigO(\sqrt{\eps}))\int_0^{x_{\lambda_k}} \frac{\pi \cosh(2p(x))\,dx}{\sqrt{\lambda_k - q(x)}}, \quad k \to \infty,
\end{equation}
then that
\begin{equation}\label{eq.t3.2.exp.goal}
	\int_0^{x_{\lambda_k}} \frac{\cosh(2p(x))\,dx}{\sqrt{\lambda_k-q(x)}} = (1+o(1))\int_0^{x_{\lambda_k}} \frac{e^{2p(x)}\,dx}{2\sqrt{\lambda_k - q(x)}}, \quad k \to \infty.
\end{equation}
This suffices to establish the theorem since \eqref{eq.t3.2.copy}, \eqref{eq.t3.2.cosh.goal}, and \eqref{eq.t3.2.exp.goal} combined give
\begin{equation}
	\mathop{\lim_{\eps \to 0}}_{\eps > 0} \limsup_{k \to \infty} \left|\|P_k\|\frac{\int_0^{x_{\lambda_k}}(\lambda_k-q(x))^{-1/2}\,dx}{\int_0^{x_{\lambda_k}} e^{2p(x)}(\lambda_k-q(x))^{-1/2}\,dx} - \frac{1}{2}\right| = 0.
\end{equation}

Our goal is to show that errors may be dominated by the integral on the right-hand side of \eqref{eq.t3.2.cosh.goal} over the interval from $x_{\lambda_k/3} = q^{-1}(\lambda_k/3)$ to $b_k^-$. We therefore follow the strategy of \eqref{eq.qprime.max} and obtain
\begin{equation}
	\begin{aligned}\label{eq.D.qprime}
	D := \int_{x_{\lambda_k/3}}^{b_k^-} \frac{\pi \cosh(2p(x))\,dx}{\sqrt{\lambda_k -q(x)}} & = \int_{x_{\lambda_k/3}}^{b_k^-} \frac{q'(x)\pi \cosh(2p(x))\,dx}{q'(x)\sqrt{\lambda_k-q(x)}}
	\\ & \geq \frac{\pi \cosh(2p(x_{\lambda_k/3}))}{q'(b_k^-)}\left(\sqrt{\frac{2}{3}} - \sqrt{\eps}\right)\sqrt{\lambda_k}
	\\ & \geq \frac{1}{C}\frac{\sqrt{\lambda_k}\cosh(2p(x_{\lambda_k/3}))}{q'(b_k^-)}.
	\end{aligned}
\end{equation}
Using that $q'(x)/q(x) = \BigO(1/x)$, we also have that
\begin{equation}\label{eq.D.noqprime}
	D \geq \frac{1}{C}\frac{q(b_k^-)\cosh(2p(x_{\lambda_k/3}))}{(1-\eps)\sqrt{\lambda_k} q'(b_k^-)} \geq \frac{1}{C} \lambda_k^{-1/2}b_k^- \cosh(2p(x_{\lambda_k/3})).
\end{equation}
We also use that, following \eqref{bp.1} and \eqref{bp.2}, for $k$ sufficiently large,
\begin{equation}
	\frac{b_k^+ - x_{\lambda_k/3}}{x_{\lambda_k/3}} \leq \frac{3(1+\eps - \frac{1}{3})q(x_{\lambda_k/3})}{x_{\lambda_k/3}q'(x_{\lambda_k/3})} \leq C,
\end{equation}
so our hypothesis \eqref{p'.slow} implies that
\begin{equation}\label{eq.slow.p.exp.ratio}
	\frac{\cosh(2p(b_k^+))}{\cosh(2p(x_{\lambda_k/3}))} \leq 2e^{2p(b_k^+)-2p(x_{\lambda_k/3})}(1+o(1)) \leq Ce^{C\frac{b_k^+ - x_{\lambda_k/3}}{x_{\lambda_k/3}}} \leq C.
\end{equation}

From \eqref{int.est.inf} and \eqref{eq.D.noqprime}, we have 
\begin{equation}\label{eq.t3.2.cosh.1}
	\int_{b_k^+}^\infty \cosh(2p(x))u_k(x)^2\,dx \leq C_\eps e^{- \frac{1}{C_\eps}\sqrt{\lambda_k}x_{\lambda_k}} = o(D), \quad k \to \infty.
\end{equation}
For the interval $[b_k^-, b_k^+)$, we use Corollary \ref{cor:u.bpm} and follow \eqref{eq.qprime.max}. Taking into account \eqref{eq.D.qprime} and \eqref{eq.slow.p.exp.ratio}, we see that
\begin{multline}\label{eq.t3.2.cosh.2}
\int_{b_k^-}^{b_k^+} \cosh(2p(x)) u_k(x)^2\,dx
\leq 
C \cosh(2p(b_k^+)) \int_{b_k^-}^{x_\lambda} \frac{dx}{\sqrt{\lambda_k - q(x)}} 
\\
\leq
\frac{C \sqrt{\varepsilon \lambda_k}}{q'(b_k^-)}  \cosh(2p(b_k^+)) \leq C\sqrt{\eps}D.
\end{multline}
To replace $u(x)^2$ with $(\lambda_k-q(x))^{-1/2}$ on the interval $[0, b_k^-)$, we apply Proposition \ref{prop:u.inside} with $v(x) = \cosh(2p(x))$; note that $v'$ is then positive on $[0, b_k^-)$ and so
\begin{equation}
	\|v'\|_{L^1((0, b_k^-))} = v(b_k^-) - v(0) = \cosh(2p(b_k^-)) - 1.
\end{equation}
Using that $p(b_k^-)\to\infty$ as $k\to\infty$, as well as \eqref{eq.D.noqprime} and \eqref{eq.slow.p.exp.ratio} with $p(b_k^-) \leq p(b_k^+)$, we see that
\begin{multline}\label{eq.t3.2.cosh.3}
	\left|\int_0^{b_k^-} \cosh(2p(x))u(x)^2\,dx - \int_0^{b_k^-} \frac{\pi \cosh(2p(x))\,dx}{\sqrt{\lambda_k - q(x)}}\right| \\ \leq \frac{C(1+|\log \eps|)}{\eps \lambda_k}\cosh(2p(b_k^-)) \leq C\frac{(1+|\log\eps|)}{\eps b_k^- \sqrt{\lambda_k}}D = o(D), \quad k\to\infty.
\end{multline}
Taking \eqref{eq.t3.2.cosh.1}, \eqref{eq.t3.2.cosh.2}, and \eqref{eq.t3.2.cosh.3} together proves \eqref{eq.t3.2.cosh.goal}.

The final step is to establish \eqref{eq.t3.2.exp.goal}. We have
\begin{equation}\label{cosh.approx.e2p.1}
	\begin{aligned}
	\int_0^{x_{\lambda_k}} \frac{e^{-2p(x)}}{\sqrt{\lambda_k-q(x)}}\,dx &\leq \sqrt{\frac{2}{\lambda_k}}\int_0^{x_{\lambda_k/2}} e^{-2p(x)}\,dx + e^{-2p(x_{\lambda_k/2})}\int_{x_{\lambda_k/2}}^{x_{\lambda_k}} \frac{dx}{\sqrt{\lambda_k - q(x)}}
		\\ &\leq \sqrt{\frac{2}{\lambda_k}}x_{\lambda_k/2} + e^{-2p(x_{\lambda_k/2})}\int_{x_{\lambda_k}/2}^{x_{\lambda_k}} \frac{dx}{\sqrt{\lambda_k - q(x)}}.
	\end{aligned}
\end{equation}
On the other hand, 
\begin{equation}\label{cosh.approx.e2p.2}
	\int_0^{x_{\lambda_k}} \frac{e^{2p(x)}}{\sqrt{\lambda_k-q(x)}}\,dx \geq e^{2p(x_{\lambda_k/2})}\int_{x_{\lambda_k/2}}^{x_{\lambda_k}} \frac{dx}{\sqrt{\lambda_k - q(x)}}.
\end{equation}
This clearly dominates the second term of \eqref{cosh.approx.e2p.1}. Following \eqref{eq.qprime.max} and \eqref{eq.qprime.max.2},
\begin{equation}\label{cosh.approx.e2p.3}
	\begin{aligned}
	\sqrt{\frac{2}{\lambda_k}}x_{\lambda_k/2}\left(e^{2p(x_{\lambda_k/2})}\int_{x_{\lambda_k/2}}^{x_{\lambda_k}} \frac{dx}{\sqrt{\lambda_k - q(x)}}\right)^{-1} &\leq \sqrt{\frac{2}{\lambda_k}}x_{\lambda_k/2}\left(e^{2p(x_{\lambda_k/2})} \frac{\sqrt{2 \lambda_k}}{q'(x_{\lambda_k})}\right)^{-1}
	\\ &= \frac{q'(x_{\lambda_k})x_{\lambda_k/2}}{e^{2p(x_{\lambda_k/2})} \lambda_k}
	\\ & \leq \frac{Cx_{\lambda_k/2}}{x_{\lambda_k}e^{2p(x_{\lambda_k/2})}} \to 0, \quad k \to \infty.
	\end{aligned}
\end{equation}
The estimates in \eqref{cosh.approx.e2p.2} and \eqref{cosh.approx.e2p.3}, taken with \eqref{cosh.approx.e2p.1}, allow us to establish that
\begin{equation}\label{eq.t3.2.e2px.bigger}
	\int_0^{x_{\lambda_k}} \frac{e^{-2p(x)}\,dx}{\sqrt{\lambda_k - q(x)}} = o\left(\int_0^{x_{\lambda_k}} \frac{e^{2p(x)}\,dx}{\sqrt{\lambda_k - q(x)}}\right).
\end{equation}
As a consequence, \eqref{eq.t3.2.exp.goal} holds, and the proof of the theorem is complete.
\end{proof}

\begin{remark}\label{rem.t3.2.asymp.inv}
We can modify \eqref{cosh.approx.e2p.1}--\eqref{cosh.approx.e2p.3} to show that the quotient in \eqref{e3Conclusion.2} only depends on the limiting behavior of $p(x)$ in the following sense.

Let $\lambda_k$ and $x_{\lambda_k}$ be the eigenvalues and turning points of $T$ as in Theorem \ref{t3.2}, and let both $p_1$ and $p_2$ satisfy Assumption \ref{asm:p} and
\begin{equation}\label{eq.rem.plimit.diff.to.zero}
	p_1(x) - p_2(x) = o(1), \quad x\to\infty.
\end{equation}
Then
\begin{equation}\label{eq.rem.plimit.statement}
	\lim_{k\to\infty} \frac{\int_0^{x_{\lambda_k}} e^{2p_1(x)}(\lambda_k - q(x))^{-1/2}\,dx}{\int_0^{x_{\lambda_k}} e^{2p_2(x)}(\lambda_k - q(x))^{-1/2}\,dx} = 1.
\end{equation}

For a sequence $\{c_k\}_{k\in\Bbb{N}}$ with $0 \leq c_k \leq x_{\lambda_k/2}$ to be determined, write
\begin{multline}\label{eq.rem.plimit.spread}
	\int_0^{x_{\lambda_k}} \frac{e^{2p_1(x)}}{\sqrt{\lambda_k - q(x)}}\,dx = \int_0^{c_k} \frac{e^{2p_1(x)}}{\sqrt{\lambda_k-q(x)}}\,dx + \int_{c_k}^{x_{\lambda_k}} \frac{e^{2p_1(x)}}{\sqrt{\lambda_k-q(x)}}\,dx
	\\ \leq e^{2p_1(c_k)}\int_0^{c_k} \frac{dx}{\sqrt{\lambda_k-q(x)}} + e^{2\sup_{x \geq c_k}(p_1(x)-p_2(x))}\int_{c_k}^{x_{\lambda_k}} \frac{e^{2p_2(x)}}{\sqrt{\lambda_k-q(x)}}\,dx.
\end{multline}
Since $(\lambda_k - q(x))^{-1/2} \leq (\lambda_k/2)^{-1/2}$ for $x \in [0, c_k] \subset [0, x_{\lambda_k/2}],$
\begin{equation}
	\frac{\int_0^{c_k}(\lambda_k - q(x))^{-1/2}\,dx}{\int_0^{x_{\lambda_k}} (\lambda_k - q(x))^{-1/2}\,dx} \leq \frac{c_k(\lambda_k/2)^{-1/2}}{x_{\lambda_k}\lambda_k^{-1/2}} = \frac{\sqrt{2}c_k}{x_{\lambda_k}}.
\end{equation}
Inserting this into \eqref{eq.rem.plimit.spread} and using $p_2(x) \geq 0$, so
\begin{equation}
	\int_0^{x_{\lambda_k}} (\lambda_k - q(x))^{-1/2}\,dx \leq \int_0^{x_{\lambda_k}} e^{2p_2(x)}(\lambda_k - q(x))^{-1/2}\,dx,
\end{equation}
we see that
\begin{multline}\label{eq.rem.plimit.tobound}
	\int_0^{x_{\lambda_k}} \frac{e^{2p_1(x)}}{\sqrt{\lambda_k - q(x)}}\,dx 
	\\ \leq \left(\frac{\sqrt{2}c_k e^{2p_1(c_k)}}{x_{\lambda_k}} + e^{2\sup_{x \geq c_k}(p_1(x)-p_2(x))}\right) \int_0^{x_{\lambda_k}} \frac{e^{2p_2(x)}}{\sqrt{\lambda_k-q(x)}}\,dx.
\end{multline}

Because $x_{\lambda_k/2}, x_{\lambda_k} \to \infty$ as $k \to \infty$ we may choose $c_k \to \infty$ sufficiently small so that $\frac{\sqrt{2}c_k e^{2p_1(c_k)}}{x_{\lambda_k}} \to 0$, and since $p_1(x)-p_2(x) = o(1)$, we have for this choice of $c_k$ that 
\begin{equation}
e^{2\sup_{x \geq c_k}(p_1(x)-p_2(x))} + \frac{\sqrt{2}c_k e^{2p_1(c_k)}}{x_{\lambda_k}}
 = 1+o(1), \quad k \to \infty.
\end{equation}
Putting this in \eqref{eq.rem.plimit.tobound} and reversing the roles of $p_1$ and $p_2$ gives \eqref{eq.rem.plimit.statement}.

Therefore, if $p_1$ and $p_2$ satisfy \eqref{eq.rem.plimit.diff.to.zero} and the hypotheses of Theorem \ref{t3.2}, then the asymptotics for the norms of the corresponding spectral projections are the same, that is, their ratio tends to 1.
\end{remark}

\begin{corollary}\label{cor:Pk.beta.slow}
Let $q(x)= |x|^\beta$, $\beta \geq 2$, let $p$ satisfy Assumption \ref{asm:p}, let $L$ be defined as \eqref{e3LDef}, let $\{P_k\}_k$ be its spectral projections, let $\Omega_\beta$ be as in \eqref{e3klambdak} and let 
\begin{equation}
	B(x, y) = \int_0^1 t^{x-1}(1-t)^{y-1}
\end{equation}
be the beta function.
\begin{enumerate}[{\upshape i)}]
\item\label{cor.slow.i} For $\gamma > 0$, let
\begin{equation}
p(x) = \frac{1}{2}\gamma \log x, \quad \forall x > 2.
\end{equation}		
Then
\begin{equation}\label{eq.cor3.2.i}
\lim_{k \rightarrow \infty} \frac{\|P_k\|}{k^{\frac{2}{2+\beta}\gamma}} 
= 
\frac12 
\left(
\frac{\pi}{2 \Omega_\beta}
\right)^{\frac{2}{2 + \beta}\gamma} \frac{B(\frac{\gamma+1}{\beta}, \frac12)}{B(\frac{1}{\beta}, \frac12)} .
\end{equation}
\item For $\gamma > 0$, let
\begin{equation}
p(x) = \frac{1}{2}\gamma \log \log x \quad \forall x > e^2.
\end{equation}
Then
\begin{equation}
\lim_{k \rightarrow \infty} \frac{\|P_k\|}{(\log k)^{\gamma}} 
= 
\frac12 \left(\frac{2}{2+\beta}\right)^{\gamma}.
\end{equation}
\end{enumerate}
\end{corollary}
\begin{proof}
	Following Remark \ref{rem.t3.2.asymp.inv}, modifying $p(x)$ on a bounded set does not change the asymptotics in Theorem \ref{t3.2}. For $p(x) = \frac{1}{2}\gamma\log x$, we change variables twice to compute
	\begin{equation}
	\begin{aligned}
	\int_0^{x_{\lambda_k}} \frac{e^{2p(x)}\,dx}{\sqrt{\lambda_k - q(x)}} &= \int_0^{x_{\lambda_k}} x^\gamma (x_{\lambda_k}^\beta - x^\beta)^{-1/2}\,dx
	\\ &= x_{\lambda_k}^{1+\gamma-\frac{\beta}{2}}\int_0^1 x^\gamma (1-x^\beta)^{-1/2}\,dx
	\\ &= \beta x_{\lambda_k}^{1+\gamma-\frac{\beta}{2}} B\left(\frac{1+\gamma}{\beta}, \frac{1}{2}\right).
	\end{aligned}
	\end{equation}
	Combining with the case $\gamma = 0$, along with Theorem \ref{t3.2}, gives us that
	\begin{equation}
		\lim_{k\to\infty} \|P_k\|\frac{B(\frac{1}{\beta}, \frac{1}{2})}{B(\frac{1+\gamma}{\beta}, \frac{1}{2})x_{\lambda_k}^\gamma} = \frac{1}{2}.
	\end{equation}
	The result \eqref{eq.cor3.2.i} then comes from the asymptotics \eqref{xlambdak.qbeta.asym} for $x_{\lambda_k}$.

	For the second computation, when $p(x) = \frac{1}{2}\gamma \log \log x$, we compute
	\begin{equation}
	\begin{aligned}
	\int_0^{x_{\lambda_k}} \frac{e^{2p(x)}\,dx}{\sqrt{\lambda_k - q(x)}} &= \int_0^{x_{\lambda_k}} (\log x)^\gamma (x_{\lambda_k}^\beta - x^\beta)^{-1/2}\,dx
	\\ &= x_{\lambda_k}^{1-\frac{\beta}{2}} \int_0^1 (\log x_{\lambda_k} + \log x)^\gamma (1-x^\beta)^{-1/2}\,dx
	\\ &= (\log x_{\lambda_k})^\gamma (1+o(1))\int \frac{dx}{\sqrt{\lambda_k - q(x)}}, \quad k \to \infty.
	\end{aligned}
	\end{equation}
	The result follows from Theorem \ref{t3.2} and \eqref{xlambdak.qbeta.asym}.
\end{proof}

\section{Additional remarks}\label{s4Remarks}

\subsection{When $p$ is complex-valued, non-odd, or non-smooth}\label{secNonOdd}

We begin by noting that the analysis in Section \ref{s3Monomial} is completely insensitive to an imaginary part of $p$.  
Recalling the conjugation \eqref{e3Conjugation}, we can write
\begin{equation}\label{e4Complex1}
 	T+B(p+ir) = W(T+B(p))W^*
\end{equation}
for the unitary operator on $L^2(\R)$ 
\begin{equation}\label{e4Complex2}
 	Wu(x) = e^{ir(x)}u(x).
\end{equation}
Therefore so long as $r \in C^{2}(\R)$ is a real-valued function, we can define the operator $T+B(p+ir)$ via \eqref{e4Complex1} and \eqref{e4Complex2} and note that its spectral properties are determined entirely by the real part $p$. 

We also note that the assumption that $p$ is odd in most of Sections \ref{sec.ho} and \ref{s3Monomial} was not essential.  Without it, $\|f_k\|\neq\|g_k\|$, but one can still find $\|f_k\|$ and $\|g_k\|$ separately and compute the spectral projection norms via \eqref{Pk.fk.gk}.

To illustrate the different asymptotic regimes which appear, we present an example where $q(x)=x^2$ and $p(x)$ is even and $p(x)\rightarrow +\infty$ as $|x|\rightarrow \infty$.  The principal contribution to $\|g_k\|$ therefore comes from $x$ small compared with $x_{\lambda_k} = \sqrt{2k+1}$.

Therefore, let $T$ be the harmonic oscillator as in Section \ref{sec.ho}, let
\begin{equation}\label{e4pDefEven}
 	p(x) = a\sqrt{1+x^2}, \quad a > 0.
\end{equation}
and let the perturbation $B(p)$ be defined by formula \eqref{e3BDef}.
We remark that equally sharp asymptotics could be found assuming only that $p\in C^2(\R)$, that $p'$ and $p''$ are bounded, and that $p(x) = a_\pm x + \BigO(|x|^{-1})$ as $x \rightarrow \pm \infty$ with $a_\pm \neq 0$.

\begin{proposition}\label{p4Even}
 	Let $P_k$ denote the spectral projection for $L$, defined in \eqref{e3LDef} with $p$ as in \eqref{e4pDefEven}, for the eigenvalue $\lambda_k = 2k+1$.  Then we have the asymptotic formula
	\begin{equation}\label{e4pEven}
		\|P_k\| = \frac{\|e^{-p(x)}\|}{\pi^{3/4}(2k)^{3/8}a^{1/4}} \exp\left(a\sqrt{2k}\right)(1+\BigO(k^{-1/2})).
	\end{equation}
\end{proposition}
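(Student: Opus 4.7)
The plan is to compute $\|P_k\|$ via the product $\|f_k\|\|g_k\|$, as in Section \ref{ss2Comp}, and to analyze the two integrals
\[
\|f_k\|^2 = \int_\R e^{2p(x)}h_k(x)^2\,dx, \qquad \|g_k\|^2 = \int_\R e^{-2p(x)}h_k(x)^2\,dx
\]
in their distinct asymptotic regimes: the weight $e^{2p}$ selects contributions from the turning-point scale $|x|\sim\sqrt{2k+1}$, whereas $e^{-2p}$ concentrates on a bounded neighbourhood of the origin, where $h_k^2$ oscillates at frequency $\sim\sqrt{k}$ around a slowly-varying envelope.

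For $\|f_k\|^2$, the first step is to exploit $p(x) = a|x| + \BigO(|x|^{-1})$ as $|x|\to\infty$, writing $e^{2p(x)} = e^{2a|x|}(1+r_k(x))$ with $|r_k(x)|\leq C/(1+|x|)$. Localization of $e^{2a|x|}h_k^2$ near the turning points (a consequence of the Laguerre asymptotic below) makes the $r_k$-contribution $\BigO(k^{-1/2})$ relative to the main integral. By evenness of $h_k^2$, $\int_\R e^{2a|x|}h_k^2 = 2\int_0^\infty e^{2ax}h_k^2$, and this differs from $\int_\R e^{2ax}h_k^2$ by at most $\int_0^\infty e^{-2ax}h_k^2 \leq 1$, which is negligible compared with the exponentially large main term. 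As in the proof of Theorem \ref{t2}, formula 7.374.7 of \cite{GraRyz2000} gives $\int_\R e^{2ax}h_k^2 = e^{a^2}L_k^0(-2a^2)$, and the Laguerre asymptotic \eqref{e2Laguerre} then yields an asymptotic for $\|f_k\|^2$ with exponent $2a\sqrt{2k}$ and polynomial prefactor of order $(2k)^{-1/4}$.

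For $\|g_k\|^2$, the plan is to average the oscillatory Plancherel--Rotach asymptotics \cite[Thm.\ 8.22.9]{Sze1959} against the smooth, $k$-independent weight $e^{-2p(x)}$. On any fixed interval $|x|\leq R$, one has
\[
h_k(x)^2 = \frac{1-\cos 2\theta_k(x)}{\pi\sqrt{2k+1-x^2}}\bigl(1+\BigO(k^{-3/4})\bigr),
\]
with $\theta_k'\sim\sqrt{k}$. Integration by parts in $\theta_k$ kills the $\cos 2\theta_k$ contribution at order $\BigO(k^{-1/2})$, while the smooth envelope is uniform in $k$ on $|x|\leq R$ and equivalent to $(\pi\sqrt{2k+1})^{-1}$; the tail $|x|>R$ is bounded by $e^{-2aR}$, since $p(x)\geq a|x|$, and is negligible for $R$ large. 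Taking a square root gives $\|g_k\|$ of order $(2k)^{-1/4}$ with $\|e^{-p}\|$ as coefficient.

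The main obstacle is the quantitative oscillatory analysis for $\|g_k\|^2$: the Plancherel--Rotach error must remain uniform in $k$ on an interval absorbing essentially all the mass of $e^{-2p}$, while the intermediate range between bounded $x$ and the turning points must be treated separately without spoiling the $\BigO(k^{-1/2})$ remainder. Combining the two asymptotics, the exponent $2a\sqrt{2k}$ of $\|f_k\|^2$ halves to $a\sqrt{2k}$ in $\|P_k\|$, the polynomial prefactors $(2k)^{-1/8}$ and $(2k)^{-1/4}$ multiply to $(2k)^{-3/8}$, and the constants combine to produce the coefficient in \eqref{e4pEven}.
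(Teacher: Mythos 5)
Your overall route is the same as the paper's: write $\|P_k\|=\|f_k\|\,\|g_k\|$, reduce $\|f_k\|^2$ to the Laguerre computation of Theorem \ref{t2} after replacing $e^{2p(x)}$ by $e^{2a|x|}$, and average the Plancherel--Rotach oscillation of $|h_k|^2$ against $e^{-2p}$ by integration by parts for $\|g_k\|^2$. But two steps, as written, do not give \eqref{e4pEven}. First, the reduction of $\|f_k\|^2$ has a factor-of-two error: with $I_\pm=\int_0^\infty e^{\pm 2ax}h_k(x)^2\,dx$ one has $\int_\R e^{2a|x|}h_k^2\,dx=2I_+$ while $\int_\R e^{2ax}h_k^2\,dx=I_++I_-$, so the difference between these two quantities is $I_+-I_-$, which is comparable to the main term itself, not bounded by $I_-\le 1$ as you claim. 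The correct statement is $\int_\R e^{2a|x|}h_k^2\,dx=2e^{a^2}L^0_k(-2a^2)+\BigO(1)$, i.e.\ \emph{twice} the quantity in \eqref{e2Precise}; this is exactly why the paper's $\Psi_a$ in \eqref{e4PsiDef} carries no prefactor $\tfrac12$. With your version $\|f_k\|^2$ is halved and the final coefficient acquires a spurious $2^{-1/2}$, which is fatal here since the proposition is a sharp-constant asymptotic. Relatedly, you never actually carry the constants through: the envelope $\bigl(\pi\sqrt{2k+1-x^2}\,\bigr)^{-1}$ you quote differs by a factor $\pi$ from the density used in the paper's argument, and combined with the halved $\|f_k\|^2$ your stated formulas do not multiply out to the coefficient in \eqref{e4pEven}; the normalization must be fixed consistently (e.g.\ checked against $\|h_k\|=1$ and against $\Psi_a$) and then propagated explicitly rather than asserted.

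Second, your treatment of $\|g_k\|^2$ on a fixed interval $|x|\le R$ cannot deliver the stated remainder: the tail bound $e^{-2aR}$ is a $k$-independent constant, so even after a diagonal argument in $R$ you only obtain $\|g_k\|^2=(2k)^{-1/2}\bigl(\|e^{-p}\|^2+o(1)\bigr)$, not the relative error $\BigO(k^{-1/2})$ required by \eqref{e4pEven}. You correctly identify this as the main obstacle, but it is left unresolved. The paper closes it by invoking the Plancherel--Rotach asymptotics uniformly on the whole range $|x|\le x_*=(1-\eps)\sqrt{2k+1}$, where $\psi_k'(x)=\sqrt{2k+1-x^2}\ge\sqrt{k}/C$ and $|\psi_k''(x)|\le C$, so the integration by parts yields $\BigO(k^{-1/2})$ on all of $[-x_*,x_*]$; only afterwards is the weight cut at the $k$-dependent radius $|x|\le(2k+1)^{1/4}$, where $1/\sin\theta=1+\BigO(k^{-1/2})$ and the discarded tail of $e^{-2p}$ is $\BigO\bigl(\exp(-c(2k)^{1/4})\bigr)$, far below the $k^{-1/2}$ threshold. (Note also that the Plancherel--Rotach error is additive inside the bracket, not a multiplicative $1+\BigO(k^{-3/4})$, since the leading term vanishes at the zeros of $\sin\psi_k$.) With these repairs --- the factor of two in $\|f_k\|^2$ and the $k$-dependent cutoff for $\|g_k\|^2$ --- your argument coincides with the paper's proof.
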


\begin{proof} 
	In order to analyze $\|f_k\|=\|e^{p} h_k\| $, begin by introducing
	\begin{equation}\label{e4PsiDef}
		\Psi_a := \int_{-\infty}^\infty e^{2a|x|}|h_k(x)|^2\,dx.
	\end{equation}
	Using that $|h_k(x)|^2$ is even,  we have that
	\[
		\begin{aligned}
		\Psi_a &= 2\int_0^\infty e^{2ax}|h_k(x)|^2\,dx
		& = 2\int_{-\infty}^\infty e^{2ax}|h_k(x)|^2\,dx - 2\int_{-\infty}^0 e^{2ax}|h_k(x)|^2\,dx.
		\end{aligned}
	\]
	Recall that $a > 0$ and that the $h_k$ are normalized in $L^2(\R)$, and therefore the third integral is bounded by 1. Asymptotics for the second integral are computed in Theorem \ref{t2} where the integral appears as $\|P_k\|$, and so we conclude that
	\begin{equation}\label{e4PsiValue}
		\Psi_a = \frac{1}{(2k)^{1/4}\sqrt{a\pi}}\exp\left(2^{3/2}a\sqrt{k}\right)\left(1+\BigO(k^{-1/2})\right).
	\end{equation}
	Then
	\[
		\|f_k\|^2 = \Psi_a + 2\int_0^\infty e^{2a|x|}\left(e^{2(p(x)-a|x|)}-1\right)|h_k(x)|^2 \,dx.
	\]
	It is easy to use the Taylor series to show that
	\[
		e^{2(p(x)-a|x|)}-1 = \frac{a}{x} + \BigO(x^{-2}), \quad x\rightarrow +\infty.
	\]
	
	For an upper bound for the error $\|f_k\|^2 - \Psi_a$, we only need to note that the contribution when $|x| \leq \sqrt{k}$ is negligible compared with $\Psi_a$ since, using that $2^{3/2} - 2 > 0.8$,
	\[
		2\int_0^{\sqrt{k}} e^{2a|x|}\left(e^{2(p(x)-a|x|)}-1\right)|h_k(x)|^2 \,dx  \leq 2e^{2a\sqrt{1+k}}\|h_k\|^2 \ll e^{-0.8a\sqrt{k}}\Psi_a.
	\]
	The same reasoning shows that $\Psi_a$ as defined in \eqref{e4PsiDef} is determined by the integral over $\{|x| \geq \sqrt{k}\}$ with exponentially small relative error. Therefore 
	\begin{align*}
		2\int_0^\infty  e^{2a|x|} & \left(e^{2(p(x)-a|x|)}-1\right)|h_k(x)|^2 \,dx 
		\\ &=  2\int_{\sqrt{k}}^{\infty} e^{2a|x|}\left(e^{2(p(x)-a|x|)}-1\right)|h_k(x)|^2 \,dx +\BigO(e^{-0.8a\sqrt{k}}\Psi_a)
		\\ &\leq 2\int_{\sqrt{k}}^{\infty} e^{2a|x|}\left(\frac{a}{\sqrt{k}}+\BigO(k^{-1})\right)|h_k(x)|^2\,dx + \BigO(e^{-0.8a\sqrt{k}}\Psi_a)
		\\ &\leq \BigO(k^{-1/2}\Psi_a).
	\end{align*}
	We conclude that
	\begin{equation}\label{e4fkEven}
		\|f_k\|^2 = \Psi_a(1+\BigO(k^{-1/2})).
	\end{equation}
	We turn to analysis of $g_k(x)=e^{-p(x)}h_k(x)$. From Propositions \ref{prop:u.exp}, \ref{prop:u.inside}, Corollary \ref{cor:u.bpm} and the fact that $e^{-2p(x)}$ is exponentially small on $\{|x| \geq x_{\lambda_k}^{1/2} \}$ it is clear that
	\[
		\|g_k\|^2 = \int_{0}^{b_k^ -} \frac{e^{-2p(x) } \, dx}{\pi \sqrt{\lambda_k-x^2}} + \BigO(k^{-1})
		= \int_{0}^{x_{\lambda_k}^{1/2}} \frac{e^{-2p(x) }\, dx}{\pi \sqrt{\lambda_k-x^2}}+ \BigO(k^{-1});
	\]
	notice that, since the $h_k$ are normalized, the equivalent of Proposition \ref{prop:u.inside} reads
	\begin{equation}
	\begin{aligned}
	& \left|\int_0^{b_k^-} v(x) h_k(x)^2\,dx - \int_0^{b_k^-} \frac{v(x)\, dx}{\pi\sqrt{\lambda_k - x^2}}\right|
	\\
	& \qquad \qquad \leq 
	\frac {1}{\varepsilon \lambda_k}
	\left(\left(C_2 + \frac{3}{4}|\log \eps|\right)
	\|v\|_{L^{\infty}((0,b_k^-))}
	+ 
	\|v'\|_{L^1((0,b_k^-))}
	\right)
	\end{aligned}
	\end{equation}
	On the interval $[0, x_{\lambda_k}^{1/2}]$, we have
	\[
		\frac{1}{\sqrt{\lambda_k-x^2}} = \frac{1}{x_{\lambda_k}}(1- (x/x_{\lambda_k})^2)^{-1/2} = \frac{1}{\sqrt{2k}}(1+\BigO(k^{-1/2})).
	\]
	Therefore, using the rapid decay of $e^{-2p(x)}$, 
	\begin{equation}
	\begin{aligned}
		\|g_k\|^2 &=
		\int_{0}^{x_{\lambda_k}^{1/2}} \frac{e^{-2p(x)} \, dx}{\pi \sqrt{\lambda_k-x^2}} 
		= 
		\left(\frac{1}{ \pi \sqrt{2k}}\int_{0}^{x_{\lambda_k}^{1/2}} e^{-2p(x)}\,dx \right)(1+\BigO(k^{-1/2}))
		\\
		&= \frac{1}{ \pi \sqrt{2k}} \|e^{-p}\|^2(1+\BigO(k^{-1/2})).
	\end{aligned}
	\end{equation}
	Taking a square root and multiplying by \eqref{e4fkEven}, along with the definition \eqref{e4PsiDef} and value \eqref{e4PsiValue} of $\Psi_a$, proves the proposition.
\end{proof}

It is natural to study an even perturbation for which $\|e^{-p}\|$ is easy to compute, such as
\begin{equation}\label{e4pAbs}
	p(x) = a|x|, \quad a > 0,
\end{equation}
but complications arise upon calculating that the operator $B$ as in \eqref{e3BDef} would involve a delta function coming from the second derivative of $p$.

It is convenient to define $L$ via forms instead. We start by defining a form $l$ as a sum of the forms $t$ and $b$ associated with $T$ and a perturbation. Specifically,
\[
\begin{aligned}
l&= t +b, \\ 
t(f,g) &= \langle f', g' \rangle + \langle xf, xg \rangle, \\
b(f,g) & = a^2 \langle f,g \rangle + i (\langle Df,p'g \rangle + \langle p'f, Dg \rangle ),\\
\Dom(l) & = \Dom(b) = \Dom(t) = \{ f \in W^{1,2}(\R) : xf \in L^2(\R)  \}.
\end{aligned}
\]
Notice that integration by parts gives 
\[
b(f,g) = 2a f(0)\overline{g(0)} - a^2 \langle f,g \rangle +2 i \langle p'Df,g \rangle,
\]
where the first term corresponds to $2a \delta(x)$. The form $t$ is symmetric, bounded from below, and closed. Since 
\[
|b(f,f)| \leq a (2 \|f'\| + a \|f\|) \|f\| \leq C (t(f,f)+\|f\|^2)^{1/2} \|f\|,
\]
we see that $b$ is $s$-subordinated, in the sense of forms, to $t$ with $s=1/2$.  The form $b$ is therefore relatively bounded with respect to $t$ with bound $0$, just as in the operator case. It follows from \cite[Thm.\ VI.3.4]{Kato-1966} that $l$ is a sectorial closed form on $\Dom(l) =\Dom(t)$ and it uniquely determines, via the first representation theorem \cite[Thm.\ VI.2.1]{Kato-1966}, an $m$-sectorial operator $L$ with compact resolvent. The action and the domain of $L$ can be obtained explicitly following the lines of \cite[Ex.\ VI.2.16, Sec.\ VI.4.1]{Kato-1966}; they are
\begin{equation}\label{e4LAbsDef}
\begin{aligned}
L  & = - \frac{d^2}{d x^2} +x^2 - a^2  +2 i a \opnm{sgn} (x) D,  
\\
\Dom(L) & = \{ f \in W^{2,2}(\R \setminus \{0\} ): x^2 f \in L^2(\R), 
\\
& \quad \quad f(0+) = f(0-), 
f'(0+) - f'(0-) = 2a f(0)  \}.
\end{aligned}
\end{equation}
Notice that any $f \in \Dom(L)$ has a continuous extension in $W^{1,2}(\R)$. We can check that the functions $f_k(x) = e^{a|x|} h_k(x)$ belong to $\Dom(L)$, and they are in fact eigenfunctions corresponding to eigenvalues $2k+1$ as expected. Moreover, the arguments used to prove Lemma 2.5 show that the set $\{f_k\}_{k=0}^\infty$ is complete in $L^2(\R)$. 

The adjoint operator $L^*$ is associated with the adjoint form $l^ *(f,g):=\overline{l(g,f)}$, and it can be verified using the above arguments that
\[
\begin{aligned}
L^*  & = - \frac{d^2}{d x^2} +x^2 - a^2  - 2 i a \opnm{sgn}(x) D  
\\
\Dom(L^*) & = \{ f \in W^{2,2}(\R \setminus \{0\} ): x^2 f \in L^2(\R), 
\\
& \quad \quad f(0+) = f(0-), 
f'(0+) - f'(0-) = -2a f(0)  \}.
\end{aligned}
\]
Then the functions $g_k=e^{-a|x|}h_k(x)$ are eigenfunctions of $L^*$, and they also form a complete set.

The same computations which proved Proposition \ref{p4Even} may then be applied to $L$ as in \eqref{e4LAbsDef} with $a > 0$, but here we can easily compute $\|e^{-p(x)}\| = 1/a$.

\begin{proposition}\label{p4Abs}
 	Let $P_k$ denote the spectral projection for $L$, defined in \eqref{e4pAbs} and \eqref{e4LAbsDef}, for the eigenvalue $\lambda_k = 2k + 1$.  Then we have the asymptotic formula
	\begin{equation}
		\|P_k\| = \frac{1}{(\pi a)^{3/4} (2k)^{3/8}} \exp\left(a\sqrt{2k}\right)(1+\BigO(k^{-1/2})).
	\end{equation}
\end{proposition}

\subsection{Existence of an Abel basis with brackets}
Finally, we also compare our operators with $q(x)=|x|^ \beta$, $\beta \geq 2$, and $p$ obeying \eqref{e3pBound}  in Section \ref{s3Monomial} with the main theorem in \cite{Agr1994}, which provides sufficient conditions for an operator to admit various types of bases with brackets comprised of root vectors. For definitions, see \cite[Sec.\ I.1.6]{Markus-1988}.
Adding a constant $C_0 > 0$ sufficiently large, we may see that the imaginary part of $L+C_0$ is subordinated to the real part in the sense of quadratic forms, see \cite[Eq.\ (1.6)]{Agr1994}  with $s$ in \eqref{e3sDef}. We then may verify that
\[
\limsup_{j\rightarrow\infty} s_j(\Re(L+C_0)) j^{-h} > 0
\]
if and only if $h \leq 2\beta/(2+\beta)$; cf.\ \cite[Eq.\ (1.7)]{Agr1994}.

We find ourselves in the situation of the main theorem \cite[Eq.\ (1.8.c)]{Agr1994} with
\[
p(1-q) = \frac{2\beta}{2+\beta}\left(\frac{1}{2}-\max\left\{\frac{\alpha-1}{\beta},0\right\}\right) \leq \frac{\beta}{2+\beta} < 1.
\]
We may rephrase this in the following proposition.

\begin{proposition}
	Consider the operator $L$ in \eqref{e3LDef} with $q(x)=|x|^ \beta$, $\beta \geq 2$, and $p$ obeying \eqref{e3pBound}.  The collection of root vectors $\{f_k\}_{k=0}^\infty$ defined in \eqref{e3fkDef} is an Abel basis with brackets of order $\gamma$ for any
	\[
	\gamma > \frac{1}{\beta} + \max \left\{\frac{\alpha-1}{\beta},0\right\},
	\]
	and the same holds for $\{g_k\}_{k=0}^\infty$.
\end{proposition}

Recall that the existence of a Riesz basis, instead of an Abel basis, is impossible when the spectral projections norms are unbounded. That the root vectors of an operator may fail to be a Riesz basis, or even an Abel basis, is typically connected with resolvent growth away from the eigenvalues of the operator; see \cite{TreEmb2005} and the many references therein.

\subsection{Existence of an embedded Riesz basis}
\label{subsec:RB}

Several recent papers study basis property and eigenvalue asymptotics of perturbations of operators resembling the harmonic and anharmonic oscillators. Namely, the unperturbed operator $T$ is self-adjoint and non-negative, the spectrum of $T$ is discrete, all eigenvalues $\{\mu_k\}_{k \in \N}$ are simple and 
\begin{equation}\label{ev.gaps}
\lambda_{k+1} - \lambda_k \geq \kappa k^{\alpha-1},
\end{equation}
with $\kappa>0$ and $\alpha>1/2$. 
If the perturbation $B$ satisfies the so-called local subordination condition

\begin{subequations}\label{loc.so}
	\begin{equation}\label{loc.so.1}
\lim_{k \to \infty}\|B \psi_k\|  \leq c_k k^{\alpha -1},
\end{equation}
\begin{equation}\label{loc.so.2}
\lim_{k \to \infty} c_k =0,
\end{equation}
\end{subequations}
where $\{\psi_k\}_{k \in \N}$, $\|\psi_k\|=1$, are eigenfunctions of $T$ related to eigenvalues $\lambda_k$,
then the eigensystem of $T+B$ contains a Riesz basis.

The conditions \eqref{loc.so} with $\alpha=1$ appeared first in \cite{AddMit2012}, where perturbations of the harmonic oscillator by multiplication by a complex-valued function were studied and it was proved that the eigensystem of $T+B$ contains a Riesz basis. In \cite{Shkalikov-2010-269}, it was proved in an abstract setting that if $\alpha =1$ and $B$ satisfies \eqref{loc.so.1} and, instead of \eqref{loc.so.2}, $\{c_k\}_{k\in\N}$ is assumed to a bounded sequence, then the eigensystem of $T+B$ contains a basis with brackets. Moreover, the asymptotics of the eigenvalue-counting function of $T+B$ was proved in \cite{Shkalikov-2010-269} as well; see also \cite{Shkalikov-2012-18} for further generalizations. The next step, in particular showing the existence of a Riesz basis for $\alpha >1/2$ and $B$ satisfying \eqref{loc.so}, was done in \cite{AddMit2012a}. Finally, a sesquilinear form version of \eqref{loc.so} was used in \cite{Mityagin-2015} to deal with singular perturbations of the harmonic oscillator; eigenvalue asymptotics for the latter were further studied in \cite{Mityagin-2014a,Mityagin-2015-54}. 

For the harmonic oscillator and perturbations by potentials, it was showed in \cite{AddMit2012,AddMit2012a,Mityagin-2015} that the eigensystem of $T+V$ contains a Riesz basis if $V \in L^p(\R)$ with $1 \leq p < \infty$ or $\|V\|_\infty<1$. 
It is not known whether the condition $V \in L^\infty(\R)$, without the assumption $\|V\|_\infty < 1$, is sufficient to conclude that the eigensystem of $T+V$ contains a Riesz basis.

Notice that our Theorem \ref{t3} shows that the condition \eqref{loc.so} does not hold for $T$ as in \eqref{e3TDef} and the perturbation $B(p)$ as in \eqref{e3BDef} with $q$ and $p$ satisfying Assumptions \ref{asm:q} and \ref{asm:p}, respectively, since the norms of the spectral projections are unbounded. Nonetheless, our Theorem \ref{t3.2} and results summarized above enable us to analyse $T$ with $q(x)=|x|^\beta$, $\beta > 2$, perturbed by certain unbounded complex potentials.
\begin{proposition}
Let $T$ be as in \eqref{e3TDef} with $q(x)= |x|^\beta$, $\beta > 2$. If, for some $\eps, C_0 >0$,  
\begin{equation}\label{V.RB}
|V(x)| \leq C_0 \left(1 + |x|^{\frac{\beta-2}{2}-\eps}\right),
\end{equation}
then the eigensystem of  $T+V$ contains a Riesz basis.
\end{proposition}
\begin{proof}
Without loss of generality, we may assume that $\beta - 2 - 2\eps > 0$, since when \eqref{V.RB} holds, the same statement holds for any smaller $\tilde{\eps}\in (0, \eps)$. Let $\{y_k\}_{k=0}^\infty$ be the eigenfunctions of $T$ as in \eqref{yu.rel}. In view of \eqref{loc.so.1} and \eqref{loc.so.2}, our goal will be to show that
\begin{equation}\label{TplusV.goal}
\lim_{k\to\infty}\frac{\|Vy_k\|}{\|y_k\|}k^{1-\alpha} = 0,
\end{equation}
with $\alpha-1 = \frac{\beta-2}{\beta+2}$ the spacing between eigenvalues of $T$ in \eqref{ev.gaps}; see e.g.\ \cite[Eq.~(8.10)]{AddMit2012a}. Letting $C > 0$ vary from instance to instance, we have that
\begin{equation}
\begin{aligned}
\|V y_k\|^2
&\leq 
C \int_0^\infty \left(1 + |x|^{\frac{\beta-2}{2}-\eps}\right)^2 y_k(x)^2 dx
\\
&\leq
C \|y_k\|^2
+
C \int_1^\infty \cosh\left(2\left(\frac{\beta-2}{2}-\eps\right) \log x\right) y_k(x)^2 dx.
\end{aligned}
\end{equation}	

As in Corollary \ref{cor:Pk.beta.slow}.\ref{cor.slow.i}, we may find some $p(x)$ satisfying Assumption~\ref{asm:p} such that
\begin{equation}
p(x) = \frac{1}{2}(\beta-2-2\eps) \log x, \quad x > C.
\end{equation}
This is so that, when $L$ is defined as \eqref{e3LDef} and $\{P_k\}_k$ denote the spectral projections of $L$, by \ref{cor:Pk.beta.slow}.\ref{cor.slow.i} and the reasoning leading to \eqref{eq.t3.2.copy},
\begin{equation}
	\|P_k\| = (1+o(1)) \frac{\|\cosh(2p(x))y_k(x)\|^2}{\|y_k\|^2} \leq Ck^{\frac{2}{2+\beta}(\beta-2-2\eps)}, \quad k \geq 1.
\end{equation}
Taking square roots and using that $\beta-2-2\eps > 0$,
\begin{equation}
	\frac{\|Vy_k\|}{\|y_k\|} \leq C + C(1+o(1))\frac{\|\cosh(2p)y_k\|}{\|y_k\|} \leq Ck^{\frac{\beta-2-2\eps}{2+\beta}}.
\end{equation}
The conclusion \eqref{TplusV.goal} follows since $k^{\frac{\beta-2-2\eps}{2+\beta} + 1-\alpha} = k^{-\frac{2\eps}{2+\beta}}$, so condition \eqref{loc.so} is satisfied. Hence the claim follows from \cite[Thm.~6]{AddMit2012a}.
\end{proof}

\appendix

\section{}

In order to discuss solutions to the second order ODE which may not be in $L^2(\R)$, we first include a theorem from \cite{Olv1997}. To state the theorem we use the variation of a function $f\in C^1(\Bbb{R})$ over the interval $(a,b)$:
\[
\mathcal{V}_{a,b}(f) = \int_a^b |f'(x)|\,dx.
\]

\begin{theorem}\emph{\cite[Chapter 6, Theorem 2.1]{Olv1997}}\label{tAOlver}
	In a given finite or infinite interval $(a_1, a_2)$, let $f(x)$ be a positive, real, twice continuously differentiable function, $g(x)$ a continuous real or complex function, and
	\[
	F(x) = \int \left(\frac{1}{f^{1/4}}\frac{d^2}{dx^2}\left(\frac{1}{f^{1/4}}\right)-\frac{g}{f^{1/2}}\right)\,dx.
	\]
	Then in this interval the differential equation
	\[
	d^2w/dx^2 =(f(x)+g(x))w
	\]
	has twice continuously differentiable solutions
	\[
	w_1(x) = f^{-1/4}(x) \exp\left(\int f^{1/2}(x)\,dx\right)(1+\eps_1(x)),
	\]
	\[
	w_2(x) = f^{-1/4}(x) \exp\left(-\int f^{1/2}(x)\,dx\right)(1+\eps_2(x)),
	\]
	such that
	\[
	|\eps_j(x)|, \frac{1}{2}f^{-1/2}|\eps_j'(x)| \leq \exp\left(\frac{1}{2}\mathcal{V}_{a_j,x}(F)\right)-1, \quad j=1,2,
	\]
	provided that $\mathcal{V}_{a_j,x}(F) < \infty$.  If $g(x)$ is real, then the solutions are real.
\end{theorem}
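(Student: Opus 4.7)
The plan is to build each solution $w_j$ as a multiplicative perturbation $v_\pm(1+\eps_j)$ of the formal Liouville--Green approximation $v_\pm(x) = f^{-1/4}(x)\exp(\pm\int f^{1/2}(x)\,dx)$, reduce the equation for the correction to a Volterra integral equation with an explicit kernel, and bound the error by Picard iteration. A direct computation gives $v_\pm'' = (f+\phi_0)v_\pm$ with the discrepancy $\phi_0 = \tfrac{5(f')^2}{16 f^2} - \tfrac{f''}{4 f}$, and the identity $\phi_0 = f^{1/2}\cdot f^{-1/4}(f^{-1/4})''$ shows that
$$
F'(x) = \frac{\phi_0(x) - g(x)}{f^{1/2}(x)}.
$$

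Substituting $w = v_\pm(1+\eps)$ into $w'' = (f+g)w$ and rearranging gives $(v_\pm^2 \eps')' = v_\pm^2 (g - \phi_0)(1+\eps)$. The next step is to choose the endpoint $a_j$ of $(a_1,a_2)$ from which $v_\pm$ is relatively small (so $a_1$ for $w_1$, $a_2$ for $w_2$), to impose $\eps_j(a_j) = 0$ and $(v_\pm^2 \eps_j')(a_j) = 0$, and to integrate twice, yielding
$$
\eps_j(x) = \int_{a_j}^x K_j(x,t)(g(t)-\phi_0(t))(1+\eps_j(t))\,dt, \quad K_j(x,t) = \int_t^x \frac{v_\pm^2(t)}{v_\pm^2(s)}\,ds.
$$
In the Liouville variable $\xi(x) = \int f^{1/2}\,dx$ one evaluates $K_j(x,t) = \tfrac{1}{2 f^{1/2}(t)}\bigl(1 - e^{-2|\xi(x)-\xi(t)|}\bigr)$, which, in the chosen direction of integration, satisfies $0 \leq K_j(x,t) \leq \tfrac{1}{2} f^{-1/2}(t)$, and hence $|K_j(x,t)(g - \phi_0)(t)| \leq \tfrac{1}{2}|F'(t)|$.

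Writing $V(t) := \mathcal{V}_{a_j,t}(F)$, so $\tfrac{d}{dt}\bigl(\tfrac{1}{2}V(t)\bigr) = \tfrac{1}{2}|F'(t)|$, the Picard iteration $\eps_j^{(0)} \equiv 0$ and $\eps_j^{(n+1)}(x) = \int_{a_j}^x K_j(x,t)(g-\phi_0)(t)(1+\eps_j^{(n)}(t))\,dt$ gives, by induction on $n$, the estimate $|\eps_j^{(n+1)}(x) - \eps_j^{(n)}(x)| \leq \tfrac{1}{(n+1)!}\bigl(\tfrac{1}{2}V(x)\bigr)^{n+1}$, whose telescoped sum is $\exp\bigl(\tfrac{1}{2}V(x)\bigr) - 1$. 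The bound on $\tfrac{1}{2}f^{-1/2}|\eps_j'|$ comes from $v_\pm^2(x)\eps_j'(x) = \int_{a_j}^x v_\pm^2(t)(g-\phi_0)(t)(1+\eps_j(t))\,dt$: the ratio $v_\pm^2(t)/v_\pm^2(x)$ stays in $[0,1]$ in the chosen direction, and the identity $|g - \phi_0|(t) = f^{1/2}(t)|F'(t)|$ together with the preliminary bound on $\eps_j$ yields the same $\exp(\tfrac{1}{2}V) - 1$ majorant. Twice continuous differentiability of $w_j$ follows from the integral equation and the ODE, and, when $g$ is real, reality of the solutions is preserved by the iteration.

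The main obstacle is the orientation of the Volterra problem: because $v_\pm^2$ is exponentially large or small in $\xi$, only with the correct endpoint $a_j$ does the ratio $v_\pm^2(t)/v_\pm^2(s)$ remain bounded by $1$ along the path of integration and produce the clean majorant $\tfrac{1}{2}|F'(t)|$ for the kernel. Once this orientation is fixed, the factorial cancellations in the Picard iteration convert the total variation $\mathcal{V}_{a_j,x}(F)$ into the stated exponential bound essentially automatically.
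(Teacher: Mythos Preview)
The paper does not prove this theorem: it is quoted verbatim from Olver's book and used as a black box in Lemma~\ref{l3AllEigenvectors}, with no argument supplied in the paper itself. Your proposal therefore cannot be compared to a proof in the paper, but it is essentially the classical Liouville--Green construction that Olver himself gives: write $w=v_\pm(1+\eps)$, reduce to a Volterra equation, and iterate.

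Your argument is correct in structure and in almost all details. One small inaccuracy: the claim that the ratio $v_\pm^2(t)/v_\pm^2(x)$ stays in $[0,1]$ is not literally true, since $v_\pm^2 = f^{-1/2}e^{\pm 2\xi}$ and the prefactor $f^{1/2}(x)/f^{1/2}(t)$ need not be bounded by $1$. What is true, and what you actually need, is that
\[
\frac{v_\pm^2(t)}{v_\pm^2(x)}\,|g(t)-\phi_0(t)| \;=\; f^{1/2}(x)\,e^{-2|\xi(x)-\xi(t)|}\,|F'(t)| \;\leq\; f^{1/2}(x)\,|F'(t)|,
\]
so that $\tfrac{1}{2}f^{-1/2}(x)|\eps_j'(x)| \leq \tfrac{1}{2}\int_{a_j}^x |F'(t)|\,e^{V(t)/2}\,dt = e^{V(x)/2}-1$ by direct integration of $\tfrac{d}{dt}e^{V(t)/2}$. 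With this correction the derivative bound goes through exactly as stated.
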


More precise estimates of eigenfunctions $\{y_k\}_k$ of the operator $T$ defined in \eqref{e3TDef} can be obtained by the method of Langer, see e.g.\ \cite[\S 22.27]{Titchmarsh-1958-book2}. 
We discuss asymptotics in two regimes: beyond the turning point $x_\lambda$ (defined by \eqref{xlambda.def}) and strictly between the turning points $-x_\lambda$ and $x_\lambda$.
Since $q$ is even, it suffices to work on $(0,+\infty)$. We follow the notations of \cite{Giertz-1964-14}:
\begin{equation}\label{u.def}
\begin{aligned}
q(x_\lambda) &= \lambda,\quad  x_\lambda > 0, \\
a_\lambda & =q'(x_\lambda),
\\
\zeta &= \zeta(x,\lambda)= 
\begin{cases}
\int_x^{x_\lambda} \sqrt{\lambda - q(s)} \, d s, & \mbox{ for } 0<x<x_\lambda, 
\\
i \int_{x_\lambda}^x \sqrt{q(s) - \lambda} \, d s, & \mbox{ for } x > x_\lambda,
\end{cases}
\\
u & = u(x,\lambda) = \left(\frac{\zeta}{\zeta'} \right)^{1/2} K_{1/3}(-i \zeta), \quad x>0, 
\\
u_k&= u(x, \lambda_k), \quad x>0, \quad  \lambda_k \in \opnm{Spec}(T),
\end{aligned}
\end{equation}
where $K_{1/3}$ is the modified Bessel function of order $1/3$. We also define the positive numbers $\delta$ and $\delta_1$ by
\begin{equation}\label{delta.def}
\zeta(x_\lambda-\delta) = - i \zeta(x_\lambda + \delta_1) = 1.
\end{equation}
\begin{lemma}[{\cite[\S 22.27]{Titchmarsh-1958-book2}}]
	\label{lem:titch}
	Let $q$ satisfy Assumption \ref{asm:q}, $x_\lambda$ be defined by \eqref{xlambda.def} and $u$ be as in \eqref{u.def}.
	Then there is a solution of $-y'' + (q-\lambda) y =0$ on $(0,+\infty)$ such that
	\begin{equation}\label{yu.rel.ap}
	y(x) = u(x) (1+ \BigO(x_\lambda^{-1} \lambda^{-1/2}))
	\end{equation}
	uniformly with respect to $x$. 
\end{lemma}

\begin{lemma}\label{lem:u.basic}
	Let $q$ satisfy Assumption \ref{asm:q}, $x_\lambda$ be defined by \eqref{xlambda.def} and $a_\lambda,\zeta,u,\delta, \delta_1$ be as in \eqref{u.def}--\eqref{delta.def}. Then
	\begin{enumerate}[{\upshape(i)}]
		\item $\delta$ and $\delta_1$ are both $\BigO(a_\lambda^{-1/3})$ as $\lambda \to + \infty$,
		\item \label{b.lam}  
		for $\varepsilon \in (0,1)$, define the positive numbers $b^\pm$ by equation
		\begin{equation}\label{bpm.def}
		q(b^\pm) = (1 \pm \varepsilon) \lambda;
		\end{equation}
		then there exists $\lambda_{\varepsilon}>0$ such that, for all $\lambda > \lambda_\varepsilon$, 
		\begin{equation}
		b^- \leq x_\lambda - \delta \quad \mbox{ and } \quad b^+ \geq x_\lambda + \delta_1,
		\end{equation}
		
		\item\label{u2.in} for $0 \leq x < x_{\lambda} - \delta$ and $\zeta >1$,
		\begin{equation}
		u^2 = \frac{\pi}{\sqrt{\lambda-q}} \left( 1 + \sin 2 \zeta + R_1(\zeta) \right),
		\end{equation}
		where $|R_1(\zeta)| < 1/(2\zeta)$,
		\item\label{u.exp.1} there exists $C_1>0$ such that, for all $\lambda \in q(\R)$,
		\begin{equation}\label{u.int.bound}
		|u(x)| \leq \frac{C_1}{(q(x)-\lambda)^{1/4}} \exp \left(-\int_{x_\lambda}^x \sqrt{q(s) - \lambda}  \, ds \right), \quad x> x_\lambda + \delta_1,
		\end{equation}
		\item as $\lambda \to \infty$, 
		\begin{equation}\label{u.norm}
		\begin{aligned}
		\int_0^{\infty} u(x)^2 \, dx 
		& =  \int_0^{x_\lambda} \frac{\pi \, d x}{\sqrt{\lambda - q(x)}} 
		+ \BigO(x_\lambda^{2/3} \lambda^{-2/3})
		\\
		& = 
		\int_0^{x_\lambda} \frac{\pi \, d x}{\sqrt{\lambda - q(x)}} 
		\left(
		1 + \BigO(x_\lambda^{-1/3} \lambda^{-1/6} )
		\right) .
		\end{aligned}
		\end{equation}
	\end{enumerate}
\end{lemma}
\begin{proof}
	\begin{enumerate}[(i)]
		\item The number $\delta$ is defined by the equation
		\begin{equation}\label{delta.def.2}
		\int_{x_\lambda - \delta}^{x_\lambda} \sqrt{\lambda - q(x)} \, dx =1. 
		\end{equation}
		First we show that $\delta = o(x_\lambda)$ as $\lambda \to \infty$. We proceed by contradiction: let there exist $\varepsilon_0 \in (0,1/2)$ such that $\delta \geq \varepsilon_0 x_\lambda$ for all sufficiently large $\lambda$. Since $q'$ is non-decreasing,
		\begin{equation}\label{delta.def.3}
		\begin{aligned}
		1 &= \int_{x_\lambda - \delta}^{x_\lambda} \sqrt{\lambda - q(x)} \, dx 
		\geq 
		\int_{(1-\varepsilon_0)x_\lambda}^{x_\lambda} \sqrt{\lambda - q(x)} \, dx
		\\
		& \geq
		\sqrt{ q'((1-\varepsilon_0)x_\lambda)} \int_{(1-\varepsilon_0)x_\lambda}^{x_\lambda} \sqrt{x_\lambda - x} \, dx 
		\geq C (\varepsilon_0 x_\lambda)^{3/2},
		\end{aligned}
		\end{equation}
		which leads to the contradiction since $x_\lambda \to \infty$.

		Using that $q'$ is non-decreasing, Taylor polynomial (of the second degree) for $q(x_\lambda - \delta)$, $q''(x)/q'(x) = \BigO(1/x)$ as $x \to \infty$ and $\delta = o(x_\lambda)$, we obtain (with $\xi \in (x_\lambda-\delta,x_\lambda)$)
		\begin{equation}
		\begin{aligned}
		1 & = \int_{x_\lambda - \delta}^{x_\lambda} \sqrt{\lambda - q(x)} \, dx
		\\ &
		= \int_{x_\lambda - \delta}^{x_\lambda} \frac{q'(x)}{q'(x)}\sqrt{\lambda - q(x)} \, dx
		\geq  
		\frac 2 {3 a_\lambda} (\lambda - q(x_\lambda - \delta))^{3/2}
		\\
		& \geq  
		\frac 23 \delta^{3/2} a_\lambda^{1/2} 
		\left(
		1 - \frac{1}{2}\left| \frac{q''(\xi)}{q'(\xi)} \right| 
		\left| \frac{q'(\xi)}{q'(x_\lambda)} \right| \delta 
		\right)^{3/2}
		\\ &
		\geq 
		\frac 23 \delta^{3/2} a_\lambda^{1/2}  
		\left(
		1 - \frac{M \delta}{x_\lambda - \delta}  
		\right)^{3/2}
		\geq
		C \delta^{3/2} a_\lambda^{1/2},  
		\end{aligned}
		\end{equation}
		from which the claim follows. The reasoning for $\delta_1$ is similar but shorter: the analogue of \eqref{delta.def.3} is 
		\begin{equation}
		1 = \int_{x_\lambda}^{x_\lambda+\delta_1} \sqrt{\lambda - q(x)} \, dx \geq
		\sqrt{ q'(x_\lambda)} \int_{x_\lambda}^{x_\lambda+\delta_1} \sqrt{x_\lambda - x} \, dx 
		\geq \frac{2}{3} \delta^{3/2} a_\lambda^{1/2},
		\end{equation}
		from which the claim follows.
		\item We give the proof for $b^+$, the other case being analogous. Since $q$ is increasing, it suffices to show that $q(b^+) > q(x_\lambda + \delta_1)$, which, using the definition of $b^+$, can be rewritten as
		\begin{equation}
		\varepsilon > \frac{q(x_\lambda + \delta_1) - q(x_\lambda)}{q(x_\lambda)}.
		\end{equation}
		Using the mean value theorem, the assumption that $q'$ is non-decreasing and $q'(x)/q(x) = \BigO(1/x)$, we obtain (with $\xi \in (x_\lambda, x_\lambda + \delta_1)$),
		\begin{equation}
		\frac{q(x_\lambda + \delta_1) - q(x_\lambda)}{q(x_\lambda)} 
		= 
		\frac{q'(\xi) \delta_1 }{q(x_\lambda) } 
		=
		\frac{q'(\xi)}{q(\xi)} \frac{q(\xi) \delta_1} {q(x_\lambda)} 
		\leq 
		\frac{M}{x_\lambda} \frac{q(x_\lambda + \delta_1) \delta_1} {q(x_\lambda)}.
		\end{equation}
		Using the mean value theorem and the properties of $q$ again, we get
		\begin{equation}
		\frac{q(x_\lambda + \delta_1)} {q(x_\lambda)} 
		\leq 
		1 + \frac{q'(x_\lambda + \delta_1) \delta_1} {q(x_\lambda)}
		\leq
		1 + \frac M{x_\lambda} \frac{q(x_\lambda + \delta_1) \delta_1} {q(x_\lambda)},
		\end{equation}
		so
		\begin{equation}
		\frac{q(x_\lambda + \delta_1)} {q(x_\lambda)} 
		\leq 
		\frac 1 {1- \frac{M \delta_1}{x_\lambda}}. 
		\end{equation}
		Since $\delta_1 = \BigO(a_\lambda^{-1/3})$, we receive
		\begin{equation}
		\frac{q(x_\lambda + \delta_1) - q(x_\lambda)}{q(x_\lambda)} = \BigO(a_\lambda^{-1/3} x_\lambda^{-1})
		\end{equation}
		and the existence of the desired $\lambda_\varepsilon$ follows.
		\item See the proof of \cite[Lem.\ 5]{Giertz-1964-14}.
		\item See \cite[Eq.\ (14)]{Giertz-1964-14}.
		\item See \cite[Lem.\ 5]{Giertz-1964-14} and its proof.
	\end{enumerate}
\end{proof}

\begin{proposition}\label{prop:u.exp}
	Let $q$ satisfy Assumption \ref{asm:q}, $u$ be as in \eqref{u.def}, $\varepsilon \in (0,1)$, $b^+$ and $\lambda_\varepsilon$ be as in Lemma \ref{lem:u.basic}.\ref{b.lam} and $C_1>0$ from Lemma \ref{lem:u.basic}.\ref{u.exp.1}.
	Then 
	\begin{equation}\label{u.exp.2}
	|u(x)| \leq \frac{C_1(1+\frac 1 \varepsilon)^{1/4}}{q(x)^{1/4}} 
	\exp \left(
	-\frac 23 \left( \frac{\varepsilon}{1 + \varepsilon}\right)^{3/2} \frac{q(x)^{3/2}}{q'(x)}
	\right)
	\end{equation}
	holds for all $\lambda > \lambda_{\varepsilon}$ and $x > b^+$.
\end{proposition}

\begin{proof}
	For every $\varepsilon \in (0,1)$, the bound \eqref{u.int.bound} is valid for all $x > b^+$ and $\lambda > \lambda_\varepsilon$, see Lemma \ref{lem:u.basic}. Since $q'$ is non-decreasing,
	\begin{equation}
	\int_{x_\lambda}^x \sqrt{q(s) - \lambda} \, ds 
	\geq 
	\frac 1{q'(x)} \int_{x_\lambda}^x q'(s)\sqrt{q(s) - \lambda} \, ds
	= 
	\frac 23 \frac{(q(x) - \lambda)^{3/2}}{q'(x)}.
	\end{equation}
	The claim then follows from $\lambda < \frac 1 {1+\varepsilon} q(x)$, which is implied by $b^+ < x $.
\end{proof}

We turn to the analysis of $u$ before the turning point.

\begin{proposition}\label{prop:u.inside}
	Let $q$ satisfy Assumption \ref{asm:q}, $u$ be as in \eqref{u.def}, $\varepsilon \in (0,1)$, $b^-$ and $\lambda_\varepsilon$ as in Lemma \ref{lem:u.basic}.\ref{b.lam} and let $v \in W^{1,1}_{\rm loc}(\R)$.
	Then there exists $C_2 > 0$, independent of $\varepsilon$, such that
	\begin{equation}\label{eHermiteInsideWeak}
	\begin{aligned}
	& \left|\int_0^{b^-} v(x) u(x)^2\,dx - \int_0^{b^-} \frac{\pi v(x)\, dx}{\sqrt{\lambda - q(x)}}\right|
	\\
	& \qquad \qquad \leq 
	\frac {1}{\varepsilon \lambda}
	\left(\left(C_2 + \frac{3}{4}|\log \eps|\right)
	\|v\|_{L^{\infty}((0,b^-))}
	+ 
	\|v'\|_{L^1((0,b^-))}
	\right)
	\end{aligned}
	\end{equation}
	
	holds for all $\lambda > \lambda_\varepsilon$.
\end{proposition}

\begin{proof}
	Regarding Lemma \ref{lem:u.basic}.\ref{u2.in}, we need to estimate
	\begin{equation}\label{sin.R1.est}
	\int_0^{b^-} v(x) \frac{\sin 2 \zeta(x) + R_1(\zeta(x))}{-\zeta'(x)} \, dx.
	\end{equation}
	To simplify notations, norms $\|\cdot\|_{L^{\infty}((0,b^-))}$ and $\|\cdot\|_{L^{1}((0,b^-))}$ are denoted by $\|\cdot\|_\infty$, $\|\cdot\|_1$, respectively.
	First, integrating by parts and using that $\zeta'$ is decreasing and $\zeta'(b^-)^2 = \lambda - q(b^-) = \varepsilon \lambda$, we obtain
	\begin{equation}\label{sin.est}
	\begin{aligned}
	\left|
	\int_0^{b^-} v(x) \frac{\sin 2 \zeta(x)}{-\zeta'(x)} \, dx
	\right|
	&\leq 
	\frac 12
	\left(
	\left|
	\left[\frac{v(x)}{\zeta'(x)^2} \cos 2 \zeta(x) \right]_0^{b^-}
	\right|
	\right.
	\\
	& \quad
	+
	\left. 
	\left|
	\int_0^{b^-} \cos 2 \zeta(x)  \frac{v'(x) \zeta'(x) - 2 v(x) \zeta''(x)}{\zeta'(x)^3} \, dx
	\right|
	\right)
	\\
	& \leq
	\frac12 
	\left(
	\frac{2 \|v\|_\infty}{\varepsilon \lambda} + \frac{\|v'\|_1}{\varepsilon \lambda} 
	+ \|v\|_\infty (\zeta'(b^-)^{-2}-\zeta'(0)^{-2})
	\right)
	\\
	& \leq
	\frac{3\|v\|_\infty}{2\varepsilon\lambda} + \frac{\|v'\|_1}{\varepsilon \lambda}.
	\end{aligned}
	\end{equation}
	Concerning the second term in \eqref{sin.R1.est}, recall that $|R_1(\zeta)| \leq 1/(2\zeta)$, thus
	\begin{equation}
	\begin{aligned}
	\int_0^{b^-} |v(x)|  \frac{|R_1(\zeta(x))|}{|\zeta'(x)|} \, dx
	\leq \frac{\|v\|_\infty}2 \int_0^{b^-} \frac{-\zeta'(x) \, dx}{\zeta'(x)^2 \zeta(x)} 
	\leq \frac{\|v\|_\infty}{2 \varepsilon \lambda} \log \frac{\zeta(0)}{\zeta(b^-)}.
	\end{aligned}
	\end{equation}
	Next,
	\begin{equation}
	\zeta(0) = \int_0^{x_\lambda}  \sqrt{\lambda - q(x)} \, dx 
	\leq \sqrt{\lambda} x_{\lambda}
	\end{equation}
	and, using that $q'$ is non-decreasing,  $q'(x)/q(x) = \BigO(1/x)$ and $q(x_\lambda)=\lambda$,
	\begin{equation}
	\begin{aligned}
	\zeta(b^-) & = \int_{b^-}^{x_\lambda}  \sqrt{\lambda - q(x)} \, dx 
	\geq 
	\frac{1}{q'(x_\lambda)} \int_{b^-}^{x_\lambda} q'(x)  \sqrt{\lambda - q(x)} \, dx 
	=
	\frac 23 \frac{(\lambda \varepsilon)^{3/2} }{ q'(x_\lambda)}
	\\
	&
	\geq M \sqrt \lambda x_\lambda \varepsilon^{3/2}
	. 
	\end{aligned}
	\end{equation}
	Hence, altogether we have
	\begin{equation}
	\int_0^{b^-} |v(x)|  \frac{|R_1(\zeta(x))|}{|\zeta'(x)|} \, dx
	\leq
	\frac{\|v\|_\infty}{2 \varepsilon \lambda} \log
	\frac{\zeta(0)}{\zeta(b^-)} 
	\leq 
	\frac{\|v\|_\infty}{2 \varepsilon \lambda} \log \frac{M}{\varepsilon^{3/2}}.
	\end{equation}
	Combining this with \eqref{sin.est} in \eqref{sin.R1.est} gives the result.
\end{proof}

\begin{corollary}\label{cor:u.bpm}
	Let $q$ satisfy Assumption \ref{asm:q}, $u$ be as in \eqref{u.def}, $\varepsilon \in (0,1)$ and $b^\pm$ as in Lemma \ref{lem:u.basic}.\ref{b.lam}.   Then, as $\lambda \to +\infty$,
	\begin{equation}
	\int_{b^-}^{b^+}u(x)^2dx = \int_{b^-}^{x_\lambda} \frac{\pi \, dx}{\sqrt{\lambda - q(x)}} 
	\left(
	1 + \BigO(x_\lambda^{-1/3} \lambda^{-1/6}) 
	\right).
	\end{equation}
\end{corollary}

\begin{proof}
	From \eqref{u.norm} and \eqref{u.exp.2},
	\begin{equation}
	\begin{aligned}
	\int_{b^-}^{b^+}u(x)^2\,dx 
	&= 
	\int_{0}^{\infty} u(x)^2\,dx
	- \int_{0}^{b^-} u(x)^2\,dx
	- \int_{b^+}^{\infty} u(x)^2\,dx
	\\
	& =
	\int_{0}^{x_\lambda} \frac{\pi \, dx}{\sqrt{\lambda - q(x)}} 
	+ \BigO(x_\lambda^{2/3} \lambda^{-2/3}) 
	- \int_{0}^{b^-} \frac{\pi \, dx}{\sqrt{\lambda - q(x)}} 
	\\
	& \quad
	+ \int_{0}^{b^-} \frac{\pi \, dx}{\sqrt{\lambda - q(x)}} 
	- \int_{0}^{b^-} u(x)^2\,dx
	+ \BigO(e^{-C \lambda^{1/2}}).
	\end{aligned}
	\end{equation}
	Next, by Proposition \ref{prop:u.inside} with $v=1$,
	\begin{equation}
	\begin{aligned}
	\int_{b^-}^{b^+}u(x)^2\,dx 
	&= 
	\int_{b^-}^{x_\lambda} \frac{\pi \, dx}{\sqrt{\lambda - q(x)}} 
	+ \BigO(x_\lambda^{2/3} \lambda^{-2/3}) 
	+ \BigO(\lambda^{-1}).
	\end{aligned}
	\end{equation}
	Finally, since $q'$ is non-decreasing and $q'(x)/q(x) = \mathcal{O}(1/x)$,
	\begin{equation}
	\int_{b^-}^{x_\lambda} \frac{dx}{\sqrt{\lambda - q(x)}} = \int_{b^-}^{x_\lambda} \frac{q'(x)\,dx}{q'(x)\sqrt{\lambda - q(x)}} \geq 2\sqrt{\varepsilon} \frac{\sqrt{\lambda}}{q'(x_\lambda)} \geq M \frac{x_\lambda}{\sqrt{\lambda}},
	\end{equation}
	hence the claim follows.
\end{proof}

\bibliographystyle{acm}
\bibliography{VaryingRates}

\end{document}